\numberwithin{equation}{section}
\DeclareMathOperator\Hom{Hom}
\newcommand{\D}{{\mathbb D}}
\newcommand{\R}{{\mathbb R}}
\newcommand{\C}{{\mathbb C}}
\newcommand{\N}{{\mathbb N}}
\newcommand{\Z}{{\mathbb Z}}
\newtheorem{theo}{{\sc \bf Theorem}}[section]
\newtheorem{cor}[theo]{{\sc \bf Corollary}}
\newtheorem{lem}[theo]{{\sc \bf Lemma}}
\newtheorem{prop}[theo]{{\sc \bf Proposition}}
\begin{document}

\title{Rapid Decay for Odometers}

\author[Klimek]{Slawomir Klimek}
\address{Department of Mathematical Sciences,
Indiana University Indianapolis,
402 N. Blackford St., Indianapolis, IN 46202, U.S.A.}
\email{klimek@iu.edu}

\author[McBride]{Matt McBride}
\address{Department of Mathematics and Statistics,
Mississippi State University,
175 President's Cir., Mississippi State, MS 39762, U.S.A.}
\email{mmcbride@math.msstate.edu}

\thanks{We would like to thank A. Rennie for helpful comments.}

\date{\today}

\begin{abstract}
We discuss rapid decay functions on odometer Cantor spaces and their noncommutative geometry applications.
\end{abstract}

\maketitle

\section{Introduction}

Odometers are well studied \cite{D} examples of minimal dynamical systems. They are also compact Abelian groups classified by a supernatural number $S$ and denoted by $\Z_S$ in this paper. Thus, following Pontryagin \cite{M}, they admit a general theory of distributions and Fourier transform. Because odometers are Cantor sets, the role of test functions is played by locally constant functions. The aim of this paper is to introduce and study more robust classes of test functions on odometers, namely classes of ``rapid decay" functions.

The Pontryagin dual $\widehat{\Z_s}$ of the odometer space is an infinite discrete, pure torsion Abelian group. Given a discrete group endowed with a length function the property of rapid decay is a general notion that was introduced and studied by Jolissaint in \cite{J} in the context of group C$^*$-algebras. However, $\widehat{\Z_s}$ is not finitely generated and so there is no word length function usually needed to define rapid decay property.

To overcome this issue we introduce and classify non-archimedean length functions with a growth condition on $\widehat{\Z_S}$. Then, given such a length function, we define and study rapid decay functions on odometers using Fourier transform in the usual way. 

Several aspects of noncommutative geometry and topology require understanding differentiable structures in C$^*$-algebras \cite{Connes}. Those are speciﬁed by a dense $*$-subalgebra satisfying smoothness properties such as completeness under a suitable locally convex topology, spectral stability, closure under holomorphic and smooth functional calculi. Very many examples of such smooth subalgebras of C$^*$-algebras have been discovered and investigated. There are at least two fairly general approaches to constructions of such smooth algebras; one originated by Blackadar and Cuntz \cite{BC} and another general approach is due to Kissin and Shulman \cite{KS}, see also \cite{BKS}. 
We verify that the algebras of rapid decay functions form smooth subalgebras of the algebra of continuous functions on $\Z_S$ even though they do not fit the above mentioned general schemes.

As a dynamical system application of rapid decay functions we consider the cohomological equation associated to the odometer's minimal dynamical system. This equation does not have an interesting answer in the category of continuous functions however it is very manageable for rapid decay functions.

Smooth subalgebras also play a role in K-theoretic considerations. Because of stability under holomorphic functional calculus, a smooth subalgebra has the same K-theory as the corresponding C$^*$-algebra and so the generators can be constructed from smooth elements of the algebra. Also, the commutator condition in the definition of a spectral triple requires a dense $*$-subalgebra. Spectral triples can be used to represent K-homology classes of a C$^*$-algebra. In the paper we discuss in some detail K-theory and K-homology of odometers with emphasis on rapid decay subalgebras.

Odometers are the easiest class of C$^*$-algebras classifiable by a supernatural number. Others include Bunce-Deddence algebras \cite{BD1},  \cite{BD2}, Uniformly Hyperfinite Algebras \cite{Dav} and possibly more. We anticipate that the methods developed in this paper will be very useful in similar studies of those C$^*$-algebras.

The paper is organized as follows. Section 2 contains an overview of basic properties of infinite compact monothetic groups i.e the odometers, mostly following reference . In section 3 we introduce and classify non-archimedean length functions on duals of odometers.  Section  4 describes generalities of smooth subalgebras of C$^*$-algebras. In section 5 we introduce a class of rapid decay functions on odometers and prove that they form a smooth subalgebra of the C$^*$-algebra of continuous functions. Section 6 is devoted to rapid decay version of dynamical cohomology of the odometer shift. We conclude with K-theoretic remarks in section 7.

\section{Monothetic Groups}
\subsection{Odometers}
A Hausdorff topological group $G$ is called {\it monothetic} if it has a dense cyclic subgroup. In this paper we only consider the case of compact $G$.  It follows immediately that $G$ is Abelian and separable.  We start by describing the structure of such groups following \cite{HS}.  The key tool is the character (dual) group and Pontryagin duality, which translates properties of groups into properties of their duals. A good, concise book on Pontryagin duality is \cite{M}.

Let $S^1$ be the unit circle:
\begin{equation*}
S^1 = \{z\in\C : |z|=1\},
\end{equation*} 
and let $\widehat{G}$ denote the dual group $G$, the group of continuous homomorphisms from $G$ to $S^1$ equipped with the compact-open topology.  It is well known \cite{M} that if $G$ is compact then $\widehat{G}$ is discrete.  

We typically use additive notation for an Abelian group, however we use multiplicative notation for the dual group.  Given a monothetic group $G$, let $x_1$ be a generator of a dense cyclic subgroup, and we set $x_n=nx_1$ for $n\in\Z$, so that $x_0:=0$ is the neutral element of $G$. Then we can identify the dual group $\widehat{G}$ of $G$ with a discrete subgroup $\widetilde{G}$ of $S^1$ via the map given by:
\begin{equation*}
\widehat{G}\ni\chi\mapsto \chi(x_1)\in \widetilde{G}.
\end{equation*}  
Conversely, using Pontryagin duality, if $H$ is a discrete subgroup of $S^1$, then $H$ is the dual group of a compact monothetic group, namely $\widehat{H}$, see \cite{HS}.

In this paper we only consider the case when $\widehat{G}$ is of pure torsion.
We have, see \cite{M}, that $G$ is totally disconnected if and only if $\widehat{G}$ is of pure torsion.

Next, we consider odometers and their relations to monothetic groups. Further details on odometers can be found in \cite{D}.
The definition of an odometer utilized in this paper uses scales. Let $s = (s_m)_{m\in\N}$ be a sequence of positive integers such that $s_m$ divides $s_{m+1}$ and $s_m<s_{m+1}$.  There are natural homomorphisms between the consecutive finite cyclic groups $\Z/{s_{m+1}}\Z \to \Z/s_m\Z$, namely congruence modulo $s_m$. Thus the inverse limit:
$$G_s = \lim_{\underset{m\in \mathcal \N}{\longleftarrow}} \mathbb Z/ s_m\mathbb Z$$
is well defined as the subset of the countable product $\prod_m \Z/s_m\Z$ consisting of sequences $(y_1, y_2, y_3, \ldots)$ such that $y_{m+1}\equiv y_m\ (\textrm{mod } s_m)$. $G_s$ becomes a topological group when endowed with the product topology over the discrete topologies in $\Z/s_m\Z$. With our assumptions, this group is a Cantor set \cite{W}. 
In fact, $G_s$ is a compact, totally disconnected monothetic group with the generator $x_1$ of a cyclic subgroup is given by $(1,1,1,1,\ldots).$

A {\it supernatural number} $S$ is defined as the formal product: 
\[S= \prod_{p-\textnormal{prime}} p^{\epsilon_p}, \;\;\; \epsilon_p \in\{0,1, \cdots, \infty\}.\]
If $\sum \epsilon_p < \infty$ then $S$ is said to be a finite supernatural number (a regular natural number), otherwise it is said to be infinite.  Supernatural numbers can be multiplied and there is a usual divisibility theory of such numbers that includes concepts of the greatest common divisor and the least common multiple.

Given a scale $s= (s_m)_{m\in\N}$ we define the corresponding supernatural number $S$ to be the least common multiple of all $s_m$'s:
\begin{equation*}
S=\mathrm{lcm}(s_m)
\end{equation*}
It follows that $s_m$'s are divisors of $S$. It turns out that odometers are classified by the supernatural number $S$, see \cite{D}. Thus, groups $G_s$ and $G_{s'}$ are isomorphic if and only is the corresponding supernatural numbers $S$ and $S'$ are equal. In view of this, if $s$ is a scale and $S$ is the corresponding supernatural number we write:
\begin{equation*}
\Z_S:=G_s.
\end{equation*}
Also, we simply write 1 for the generator of the cyclic group $\Z$ in $\Z_S$.

\subsection{Odometer Dual} We discuss in more detail the structure of the dual group $\widetilde{\Z_S}\subseteq S^1$. It follows from the definition of the odometer that:
\begin{equation*}
\widetilde{\Z_S}=\{z\in\C: \textrm{ there is }s|S \textrm{ such that }z^s=1\}.
\end{equation*}
On the other hand, any subgroup of $S^1$ consisting of roots of unity is of the above form for a supernatural number, namely the least common multiple of the orders of all elements.
Consequently, any infinite compact, totally disconnected, monothetic group is an odometer $\Z_S$. 

Any element $z\ne 1$ of $\widetilde{\Z_S}$ can be uniquely as
\begin{equation*}
z=e^{\frac{2\pi ik}{s}}
\end{equation*}
where $s$, the order of $z$, divides $S$ and $0<k<s$ is relatively prime with $s$. Following Knuth we write $k\perp s$ whenever $k$ is relatively prime with $s$.

Alternatively, if $s = (s_m)_{m\in\N}$ is a scale, any element $z\ne 1$ of $\widetilde{\Z_S}$ can be uniquely as
\begin{equation}\label{z_scale_rep}
z=e^{\frac{2\pi ij}{s_m}}
\end{equation}
where $0<j<s_m$ and $\frac{s_m}{s_{m-1}}$ does not divide $j$.

The main structural result about $\widetilde{\Z_S}$ is contained in the following elementary statement.
\begin{prop} \label{SubgroupsProp}
Every finite subgroup of $\widetilde{\Z_S}$ is cyclic. For every divisor $s$ of $S$ there is a unique subgroup of order $s$ and it is generated by $e^{\frac{2\pi i}{s}}$.
\end{prop}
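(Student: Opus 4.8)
The plan is to use that $\widetilde{\Z_S}$ is, by its explicit description above, a subgroup of the circle $S^1$ and hence of the multiplicative group $\C^*$ of the field $\C$. Both assertions will then reduce to the elementary fact that the equation $z^n=1$ has exactly $n$ solutions in $\C$.

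First I would prove cyclicity. Given a finite subgroup $H\subseteq\widetilde{\Z_S}$ of order $n$, Lagrange's theorem forces $z^n=1$ for every $z\in H$, so $H$ is contained in the set of $n$-th roots of unity. Since $X^n-1$ has at most $n$ roots in the field $\C$, that set has exactly $n$ elements, namely $\{e^{2\pi ik/n}:0\le k<n\}$; as $|H|=n$, the inclusion must be an equality, and $H$ is cyclic with generator $e^{2\pi i/n}$. Equivalently, one may simply invoke the classical theorem that every finite subgroup of the multiplicative group of a field is cyclic.

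Next I would treat a fixed divisor $s$ of $S$. For existence, the element $e^{2\pi i/s}$ has order exactly $s$ and lies in $\widetilde{\Z_S}$ because $s\mid S$, so it generates a subgroup of order $s$. For uniqueness, any subgroup $H$ of order $s$ again consists of $s$-th roots of unity by Lagrange, and the counting argument above shows that $H$ is the full set of such roots; all of these belong to $\widetilde{\Z_S}$ since their orders divide $s$, which divides $S$. Hence $H=\langle e^{2\pi i/s}\rangle$, which gives uniqueness.

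There is no substantial obstacle here, as the argument is entirely elementary. The only point requiring care will be the supernatural-number bookkeeping, namely confirming that the relevant roots of unity genuinely belong to $\widetilde{\Z_S}$; this is immediate from the explicit description of $\widetilde{\Z_S}$ recorded above, since an $s$-th root of unity has order dividing $s$ and $s\mid S$.
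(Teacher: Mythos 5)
Your proof is correct and follows essentially the same elementary route as the paper: both arguments reduce to Lagrange's theorem together with the fact that the $n$-th roots of unity in $\C$ form a cyclic group of exactly $n$ elements (the paper indexes by the lcm of element orders, you index directly by $|H|$, which amounts to the same counting). Your version is in fact slightly more complete, since you explicitly verify the existence of a subgroup of order $s$ for each divisor $s$ of $S$, a point the paper's proof leaves implicit.
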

\begin{proof}
Let $G$ be a finite subgroup of $\widetilde{\Z_S}$ and let $s$ be the least common multiple of orders of all elements of $G$. Then $G$ is a subgroup of the cyclic group $\{e^{2\pi ij/s}:j=0,\ldots,s-1\}$ since all roots of unity of order dividing $s$ must be of the form $e^{2\pi ij/s}$. It follows that $G$, as a subgroup of a finite cyclic group, is also cyclic. By Lagrange's theorem the least common multiple of orders of all elements of $G$ is equal to its order and thus the order of $G$ is $s$. Consequently, $G$ is the cyclic group $\{e^{2\pi ij/s}:j=0,\ldots,s-1\}$ which is generated by $e^{2\pi i/s}$.
\end{proof}

\section{Length Functions}

The discrete group $\widetilde{\Z_S}$ is not finitely generated so word length functions are not available. Instead, we consider general non-archimedean length functions on $\widetilde{\Z_S}$ with an additional growth condition. They are defined as functions: $\lambda: \widetilde{\Z_S}\to [1,\infty)$ satisfying the following conditions:
\begin{enumerate}
\item $\lambda(z)=1$ iff and only if $z=1$ (normalization)
\item $\lambda(z_1z_2)\leq\text{max}\{\lambda(z_1), \lambda(z_2)\}$ (non-archimedean property)
\item For every $r\geq 1$ the set $\{z\in\widetilde{\Z_S}: \lambda(z)\leq r\}$ is finite (growth condition)
\end{enumerate}
We choose to normalize our length functions to be 1 for the unity of the group which can be done without the loss of generality since adding a constant to a length function does not change the non-archimedean property.

Elementary properties of  non-archimedean length functions on $\widetilde{\Z_S}$ are summarized in the following statement.
\begin{prop} Suppose $\lambda$ is a non-archimedean length functions on  $\widetilde{\Z_S}$. It has the following properties for all  and $z,z_1,z_2\in \widetilde{\Z_S}$:
\begin{enumerate}
\item $\lambda(z^k)\leq \lambda(z)$ for every $k\in\Z$
\item  if ord$(z_1)=$ ord$(z_2)$ then  $\lambda(z_1)=\lambda(z_2)$
\item  $\lambda(z^{-1})=\lambda(z)$
\item   if  $\lambda(z_2)<\lambda(z_1)$ then  $\lambda(z_1z_2)=\lambda(z_1)$
\end{enumerate}
\end{prop}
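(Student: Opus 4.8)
The plan is to treat the four statements in a dependency order rather than in the order listed, since the later items rely on the earlier ones. Items (1) and (3) are purely formal consequences of the non-archimedean axiom together with the fact that $\widetilde{\Z_S}$ is a torsion group; item (2) is the one genuinely structural statement and uses Proposition~\ref{SubgroupsProp}; and item (4) is the standard sharpening of the ultrametric inequality, which follows once (3) is in hand.

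First I would establish (1). For $k\ge 0$ I would argue by induction: $\lambda(z^0)=\lambda(1)=1\le\lambda(z)$ because $\lambda$ takes values in $[1,\infty)$, and the inductive step $\lambda(z^{k+1})=\lambda(z^k z)\le\max\{\lambda(z^k),\lambda(z)\}\le\lambda(z)$ is immediate from axiom~(2). To pass to negative exponents I would use that every $z\in\widetilde{\Z_S}$ is a root of unity: if $m=\mathrm{ord}(z)$ then for any $k\in\Z$ one has $z^k=z^{k\bmod m}$ with $0\le k\bmod m<m$, so the nonnegative case already gives $\lambda(z^k)\le\lambda(z)$ for all integers $k$. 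Statement (3) then follows by applying (1) in both directions: $\lambda(z^{-1})\le\lambda(z)$, and, replacing $z$ by $z^{-1}$, $\lambda(z)=\lambda((z^{-1})^{-1})\le\lambda(z^{-1})$, giving equality.

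The substantive step is (2), and here the key input is Proposition~\ref{SubgroupsProp}. If $\mathrm{ord}(z_1)=\mathrm{ord}(z_2)=s$, then each of $z_1,z_2$ generates a cyclic subgroup of order $s$; by the uniqueness clause of Proposition~\ref{SubgroupsProp} these two subgroups coincide, so $z_1$ and $z_2$ lie in the same cyclic group and are therefore powers of one another, say $z_2=z_1^{k}$ and $z_1=z_2^{l}$. Applying (1) to each relation yields $\lambda(z_2)\le\lambda(z_1)$ and $\lambda(z_1)\le\lambda(z_2)$, hence $\lambda(z_1)=\lambda(z_2)$. I expect this to be the main obstacle only in the sense that it is where the proof stops being formal and must invoke the arithmetic of $\widetilde{\Z_S}$; once one recognizes that equal order forces mutual generation, the conclusion is automatic.

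Finally, for (4) I would combine the ultrametric inequality with (3). From axiom~(2), $\lambda(z_1 z_2)\le\max\{\lambda(z_1),\lambda(z_2)\}=\lambda(z_1)$, using the hypothesis $\lambda(z_2)<\lambda(z_1)$. For the reverse inequality I would write $z_1=(z_1 z_2)z_2^{-1}$ and apply axiom~(2) again together with (3): $\lambda(z_1)\le\max\{\lambda(z_1 z_2),\lambda(z_2^{-1})\}=\max\{\lambda(z_1 z_2),\lambda(z_2)\}$. Were $\lambda(z_1 z_2)<\lambda(z_1)$, the right-hand side would be strictly less than $\lambda(z_1)$ (since $\lambda(z_2)<\lambda(z_1)$ as well), a contradiction; hence $\lambda(z_1 z_2)\ge\lambda(z_1)$ and equality holds.
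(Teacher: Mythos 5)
Your proof is correct and follows essentially the same route as the paper: iterate the ultrametric inequality for (1), use mutual generation of the common cyclic subgroup (equivalently, inverses mod $s$) for (2), and the standard trick $z_1=(z_1z_2)z_2^{-1}$ for (4). The only cosmetic difference is that you derive (3) directly from (1) while the paper deduces it from (2); both are immediate.
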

\begin{proof} We prove the properties in order stated in the proposition.

Property 1: Is a consequence of repeated use of non-archimedean property.

Property 2:  If ord$(z_1)=$ ord$(z_2)=s$ then
\begin{equation*}
z_1=e^{\frac{2\pi ik_1}{s}}\textrm{ and }z_2=e^{\frac{2\pi ik_2}{s}}
\end{equation*}
where $k_1,k_2\perp s$. If $k_1',k_2'$ are inverses of $k_1,k_2$ modulo $s$ then using Property 1 we have
\begin{equation*}
\lambda(z_2) = \lambda(z_1^{k_1'k_2})\le \lambda(z_2).
\end{equation*}
Similarly, $\lambda(z_1)\le\lambda(z_2)$.

Property 3: This is a consequence of the previous property.

Property 4: Notice that
\begin{equation*}
\lambda(z_1z_2)\le\textrm{max }\{\lambda(z_1),\lambda(z_2)\} = \lambda(z_1)
\end{equation*}
by assumption.  On the other hand, using Property 3, we have that
\begin{equation*}
\lambda(z_1) = \lambda(z_2z_2^{-1}z_1)\le\textrm{max }\{\lambda(z_2^{-1}),\lambda(z_1z_2)\}=\textrm{max }\{\lambda(z_2),\lambda(z_1z_2)\} = \lambda(z_1z_2)
\end{equation*}
where the last equality is due to the assumption.  It now follows that $\lambda(z_1)=\lambda(z_1z_2)$.
\end{proof} 

The basic examples of length functions are constructed as follows. First choose a scale  $s = (s_m)_{m\in\N}$ for the supernatural number $S$, which  is a sequence of positive integers such that $s_m$ divides $s_{m+1}$, $s_m<s_{m+1}$, and such that $S=\mathrm{lcm}(s_m)$. Set $s_0:=1$. Then choose an increasing sequence $l = (l_m)_{m\in\N}$ of numbers in $(1,\infty)$ such that $\lim_{n\to\infty}l_n=\infty$. Set $l_0:=1$. Given $s$ and $l$ the corresponding length function $\lambda_{s,l}$ is defined as follows. If $z=1$ we set $\lambda_{s,l}(1):=l_0=1$. If $z\ne 1$, using representation \eqref{z_scale_rep} of elements of $\widetilde{\Z_S}$, we set:
\begin{equation*}
\lambda_{s,l}(z)=\lambda_{s,l}\left(e^{\frac{2\pi ij}{s_m}}\right):=l_m,
\end{equation*}
for all $m\in \N$ and $0<j<s_m$ such that $\frac{s_m}{s_{m-1}}$ does not divide $j$. 
The assumption that $\lim_{n\to\infty}l_n=\infty$ implies the growth property of $\lambda_{s,l}$.

To verify the non-archimedean property suppose $z_1=e^{\frac{2\pi ij}{s_m}}$ and $z_2=e^{\frac{2\pi ik}{s_n}}$ with $s_m$ dividing $s_n$. Then we have:
\begin{equation*}
z_1z_2=e^{\frac{2\pi i(k+js_n/s_m)}{s_n}},
\end{equation*}
so, because $k+js_n/s_m$ is an integer, $\lambda_{s,l}(z_1z_2)$ is at most $s_n$.

It turns out that $\lambda_{s,l}$ encompass all possible non-archimedean length functions on $\widetilde{\Z_S}$.

\begin{theo} (Classification of length functions)  Suppose $\lambda$ is a non-archimedean length functions on  $\widetilde{\Z_S}$. Then there exist a scale $s = (s_m)_{m\in\N}$ and an increasing sequence $l = (l_m)_{m\in\N}$ of numbers in $(1,\infty)$ with $\lim_{n\to\infty}l_n=\infty$ such that $\lambda=\lambda_{s,l}$.
\end{theo}
\begin{proof}
Notice that the set $\{z\in\widetilde{\Z_S}: l(z)\leq r\}$ is a finite subgroup of $\widetilde{\Z_S}$. This follows from the non-archimedean property in the definition of a length function and from Property 3 of the proposition above. 

Let $l = (l_m)_{m\in\\Z_{\geq 0}}$ with $l_0=1$ be the increasing sequence of values of $\lambda$. Then
\begin{equation}\label{Gmdef}
G_m:= \{z\in\widetilde{\Z_S}: l(z)\leq l_m\}
\end{equation}
are finite subgroups of $\widetilde{\Z_S}$ and $G_m\subsetneq G_{m+1}$.
From Proposition \ref{SubgroupsProp} the orders $s_m=|G_m|$ of $G_m$ divide $S$. Since $G_m$ is a nontrivial subgroup of $G_{m+1}$, $s_m$ divides $s_{m+1}$ and $s_m<s_{m+1}$. Also, any element of $\widetilde{\Z_S}$ is in one of the subgroups so its order divides $s_m$ for some $m$. It follows that $\mathrm{lcm}(s_m)=S$ and so such a constructed sequence $s = (s_m)_{m\in\N}$ is a scale for the supernatural number $S$.

From the definition of the groups $G_m$, if $z\in G_m$ but $z\notin  G_{m-1}$, then $\lambda(z)=\lambda_m$. By Proposition \ref{SubgroupsProp} the structure of $G_m$ is:
\begin{equation*}
G_m=\{e^{2\pi ij/s_m}:j=0,\ldots,s_m-1\}
\end{equation*}
and so if $z\in G_m$ but $z\notin  G_{m-1}$, then $z$ is of the form $z=e^{2\pi ij/s_m}$ with $0<j<s_m$ such that $\frac{s_m}{s_{m-1}}$ does not divide $j$.
It follows that $\lambda=\lambda_{s,l}$.
\end{proof}

\section{Generalities on Smooth Subalgebras}

In this section we discuss a definition and several general results about smooth subalgebras of C*-algebras. Let $A$ be a unital C$^*$-algebra with norm $\|\cdot\|$ and let  $\mathcal{A}\subseteq A$
be a  dense *-subalgebra of $A$ which is a Frechet *-algebra with respect to a locally convex topology stronger than that of $A$. We will always suppose that we can define the Frechet topology of $\mathcal{A}$ using a countable collection of submultiplicative seminorms $\|\cdot\|_N$, $N=0,1,\ldots$. We call $\mathcal{A}$ a {\it Frechet subalgebra} of $A$.

There are several natural concepts of stability of a subalgebra. We say that $\mathcal{A}$ is {\it spectrally stable} if for any element $a$ of $\mathcal{A}$ its spectrum in $\mathcal{A}$ is the same as its spectrum in $A$. Equivalently, from the definition of the spectrum, $\mathcal{A}$ is spectrally stable if for any element $a\in \mathcal{A}$ that is invertible in $A$ its inverse $a^{-1}$ is in $\mathcal{A}$.

We say that $\mathcal{A}$ is {\it stable under the holomorphic calculus} if for any $a\in \mathcal{A}$ and a function $f$ that is holomorphic on an open domain containing the spectrum of $a$ we have $f(a)\in \mathcal{A}$. By considering the holomorphic (away from zero) function $\zeta\mapsto 1/\zeta$, we see that the stability under the holomorphic calculus is a stronger condition than the spectral stability. Notice however that if $\mathcal{A}$ a Frechet subalgebra of $A$ then spectral stability implies stability under the holomorphic calculus.
The usual arguments \cite{Bo} work here since we can write $f(a)$ as
\begin{equation*}
f(a) = \frac{1}{2\pi i}\int_C f(\zeta)(\zeta- a)^{-1}\,d\zeta\,.
\end{equation*}
where $C$ a contour around the spectrum of $a$. The convergence of the integral is guaranteed by the completeness of $\mathcal{A}$ and so $f(a)\in \mathcal{A}$.

We say that $\mathcal{A}$ is {\it stable under the smooth functional calculus} of self-adjoint elements if for any self-adjoint element $a$ of $\mathcal{A}$ and a smooth function $f(x)$ defined on an open neighborhood of the spectrum $\sigma(a)$ of $a$ we have $f(a)$ is in $\mathcal{A}$. In our situation of interest, when  $\mathcal{A}$ a Frechet subalgebra of $A$, we have the following elementary observation.

\begin{prop} Suppose $\mathcal{A}$ a Frechet subalgebra of a C$^*$-algebra $A$. If $\mathcal{A}$ is stable under the smooth functional calculus of self-adjoint elements then $\mathcal{A}$ is stable under the holomorphic calculus.
\end{prop}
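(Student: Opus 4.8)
The plan is to reduce the statement to spectral stability and then invoke the observation made just above the proposition: for a Frechet subalgebra, spectral stability already implies stability under the holomorphic calculus. Thus it suffices to show that whenever $a\in\mathcal{A}$ is invertible in $A$, its inverse $a^{-1}$ again lies in $\mathcal{A}$. The smooth functional calculus hypothesis, however, only gives us access to self-adjoint elements, so the first move is to transfer the problem from the arbitrary element $a$ to the self-adjoint element $a^*a\in\mathcal{A}$, which is available since $\mathcal{A}$ is a $*$-subalgebra.

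First I would record that if $a$ is invertible in $A$ then $a^*a$ is a positive invertible element of $A$, with $(a^*a)^{-1}=a^{-1}(a^{-1})^*$. Consequently its spectrum $\sigma(a^*a)$ is a compact subset of $(0,\infty)$, hence contained in some interval $[\epsilon,\|a\|^2]$ with $\epsilon>0$. The point of this step is that the function $f(x)=1/x$ is then smooth on an open neighborhood of $\sigma(a^*a)$ that avoids the origin. Applying the assumed stability under the smooth functional calculus of self-adjoint elements to $a^*a$ and to $f$, I obtain $f(a^*a)=(a^*a)^{-1}\in\mathcal{A}$. Finally, since $a^*\in\mathcal{A}$, the identity $a^{-1}=(a^*a)^{-1}a^*$, valid because the right-hand side is a left inverse of the invertible element $a$, shows that $a^{-1}\in\mathcal{A}$. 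This proves spectral stability, and the preceding remark then yields stability under the holomorphic calculus.

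The main obstacle, or rather the one place demanding care, is the invocation of the smooth functional calculus: one must be sure that the neighborhood on which $1/x$ is taken to be smooth is a neighborhood of the genuine spectrum of $a^*a$ and that this spectrum is bounded away from zero. Both facts hinge on the invertibility of $a$ in $A$ together with the positivity of $a^*a$, which force $\sigma(a^*a)\subseteq[\epsilon,\|a\|^2]$; once this is in hand, the remaining manipulations are routine operations in a $*$-algebra. I would therefore expect the bulk of the write-up to consist of cleanly establishing the location of $\sigma(a^*a)$, with the algebraic identity for $a^{-1}$ supplying the conclusion.
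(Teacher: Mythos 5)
Your proof is correct and follows essentially the same route as the paper: reduce to spectral stability, apply the smooth functional calculus to the positive invertible element $a^*a$ with the function $x\mapsto 1/x$ (smooth on a neighborhood of $\sigma(a^*a)\subseteq[\epsilon,\infty)$), and conclude via the identity $a^{-1}=(a^*a)^{-1}a^*$. The only cosmetic difference is that the paper states the positive invertible case as a separate first step before passing to general $a$, whereas you fold the two steps together.
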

\begin{proof} We only need to verify that $\mathcal{A}$ is spectrally stable. Assume first that $a\in\mathcal{A}$ is positive and invertible in $A$.  Then for some $\varepsilon>0$ we have that $\sigma(a)\subseteq[\varepsilon,\infty)$.  Notice that $x\mapsto 1/x$ is a smooth function on $[\varepsilon,\infty)$, thus $a^{-1}\in\mathcal{A}$.

Next assume $a$ is just invertible in $A$.  Generally theory implies that $a^*$ is invertible and hence $a^*a$ is invertible and positive.  So from the first part, $(a^*a)^{-1}\in\mathcal{A}$.  But $a^{-1}=(a^*a)^{-1}a^*$, and so $a^{-1}\in\mathcal{A}$ since $\mathcal{A}$ is a $*$-subalgebra.
\end{proof}

The above proposition motivates now the following definition: a  dense $*$-subalgebra $\mathcal{A}\subseteq A$ is called {\it smooth} if it is a Frechet subalgebra of $A$ which is stable under the smooth functional calculus of self-adjoint elements.

In general it is not easy to establish stability under smooth functional calculus. More information about Frechet seminorms is required, submultiplicativity is not enough. In a typical approach, like in the above Cauchy formula expressing holomorphic functions through resolvents, we can build up any smooth function from exponentials.
This motivates the following definition. 

We say that $\mathcal{A}$ has {\it polynomially bounded exponentials} if for every self-adjoint $a\in\mathcal{A}$ and every $N=0,1,\ldots$ there is an integer $k$ and a positive constant $C$ (both depending on $a$ and $N$) such that for every $t\in\R$ we have:
\begin{equation*}
\left\| e^{ita}\right\|_N\leq C(1+|t|)^k.
\end{equation*}

\begin{prop} Suppose that a Frechet subalgebra $\mathcal{A}\subseteq A$ has polynomially bounded exponentials. Then $\mathcal{A}$ is closed under smooth functional calculus of self-adjoint elements, in other words, it is a smooth subalgebra of $A$.
\end{prop}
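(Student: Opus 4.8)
The plan is to represent $f(a)$ as a Fourier integral of the exponentials $e^{ita}$ and to show that this integral already converges in the Frechet topology of $\mathcal{A}$. First I would record that the exponentials genuinely belong to $\mathcal{A}$: since the seminorms $\|\cdot\|_N$ are submultiplicative and $\mathcal{A}$ is complete, the power series $e^{ita}=\sum_{n\ge 0}(ita)^n/n!$ converges in each seminorm, with $\|e^{ita}\|_N\le e^{|t|\,\|a\|_N}$, so $e^{ita}\in\mathcal{A}$ and moreover $t\mapsto e^{ita}$ is a continuous (indeed real-analytic) $\mathcal{A}$-valued map. This makes the hypothesis of polynomially bounded exponentials meaningful and supplies the measurability needed below.

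Next I would reduce to the case of a compactly supported smooth function. Because $A$ is unital, $\sigma(a)$ is compact, and $f$ is defined on an open neighborhood of it; choosing a smooth cutoff $\chi$ equal to $1$ near $\sigma(a)$ and compactly supported in the domain of $f$, the product $g:=\chi f$ is a compactly supported smooth (hence Schwartz) function with $g=f$ on $\sigma(a)$, so $g(a)=f(a)$ by the C$^*$-functional calculus. The Fourier transform $\widehat{g}$ is then also Schwartz, hence decays faster than any polynomial, and Fourier inversion together with the spectral theorem gives
$$g(a)=\frac{1}{2\pi}\int_{\R}\widehat{g}(t)\,e^{ita}\,dt,$$
an identity valid in $A$.

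Then I would estimate the integral seminorm by seminorm. Fixing $N$, the polynomially bounded exponentials hypothesis supplies an integer $k$ and a constant $C$ with $\|e^{ita}\|_N\le C(1+|t|)^k$, whence
$$\int_{\R}\bigl\|\widehat{g}(t)\,e^{ita}\bigr\|_N\,dt\le C\int_{\R}|\widehat{g}(t)|\,(1+|t|)^k\,dt<\infty$$
because $\widehat{g}$ is rapidly decreasing. Thus the integrand is an absolutely integrable $\mathcal{A}$-valued function in every seminorm, so by completeness of $\mathcal{A}$ the Bochner integral $\frac{1}{2\pi}\int_{\R}\widehat{g}(t)\,e^{ita}\,dt$ defines an element of $\mathcal{A}$. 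Since the inclusion $\mathcal{A}\hookrightarrow A$ is continuous, it carries this integral to the same integral computed in $A$, which equals $g(a)=f(a)$; hence $f(a)\in\mathcal{A}$. As $\mathcal{A}$ is already a Frechet subalgebra, this stability under the smooth functional calculus of self-adjoint elements shows $\mathcal{A}$ is smooth.

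The main obstacle I expect is the careful handling of the vector-valued integral in the Frechet setting: one must confirm that absolute convergence in each of the countably many seminorms, together with completeness, really produces an element of $\mathcal{A}$, and that this element is sent by $\mathcal{A}\hookrightarrow A$ to the C$^*$-functional-calculus element $f(a)$. Everything else—the cutoff reduction, the rapid decay of $\widehat{g}$, and the exponential bound—is routine once the integral is set up properly.
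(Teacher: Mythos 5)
Your proof is correct, and it rests on exactly the same key idea as the paper's: synthesize $f(a)$ from the exponentials $e^{ita}$, whose seminorms the hypothesis controls polynomially, against Fourier data that decays faster than any polynomial because $f$ is smooth. The only real difference is the technical realization. The paper periodizes: it replaces $f$ by a smooth $L$-periodic function agreeing with $f$ near the compact set $\sigma(a)$ and writes $f(a)=\sum_{n\in\Z}f_n e^{2\pi i n a/L}$ with rapidly decaying Fourier coefficients, so that the convergence question is a plain absolutely convergent series in each seminorm and no vector-valued integration is needed. You instead cut off to a compactly supported $g$ and use Fourier inversion, $g(a)=\frac{1}{2\pi}\int_{\R}\widehat{g}(t)e^{ita}\,dt$, which forces you to justify a Bochner-type integral in a Frechet space; you correctly identify this as the one point needing care, and your preliminary observation that $\|e^{ita}\|_N\le e^{|t|\|a\|_N}$ (so the exponentials lie in $\mathcal{A}$ and $t\mapsto e^{ita}$ is continuous there) is exactly what makes the integral well defined as a limit of Riemann sums over expanding intervals. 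That observation is a worthwhile addition the paper leaves implicit even in its series version. In short: both arguments are sound; the paper's series formulation buys a slightly lighter-weight convergence argument, while yours buys a cleaner reduction (cutoff rather than periodic extension) at the cost of the vector-valued integral.
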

\begin{proof} Let $a$ be a self-adjoint element of $\mathcal{A}$.
Suppose $f(x)$ is a smooth function defined on an open neighborhood of the spectrum of $a$.
Without a loss of generality we can assume that $f(x)$ is smooth on $\R$ and is $L$-periodic: $f(x+L)=f(x)$ for some $L$. Then $f(x)$ admits a Fourier series representation with rapid decay coefficients $\{f_n\}$, and hence we have
\begin{equation*}
f(a)=\sum_{n\in\Z}f_ne^{2\pi ina/L}.
\end{equation*}
Using the fact that $\mathcal{A}$ has polynomially bounded exponentials and estimating directly we get
\begin{equation*}
\|f(a)\|_N\le \sum_{n\in\Z}|f_n|\|e^{2\pi ina/L}\|_N\le C\sum_{n\in\Z}|f_n|(1+|n|)^k
\end{equation*}
for some $k$ and $C>0$.  The above sum is finite since $\{f_n\}$ is a rapid decay sequence.
\end{proof}

\section{RD Smooth Subalgebras}

From now on we impose further growth condition on length functions used below. We say that a non-archimedean length function $\lambda$ {\it grows fast enough} if there are positive numbers $\alpha$ and $c$ such that for every $z\in\widetilde{\Z_S}$ we have:
\begin{equation}\label{fast_enough}
\lambda(z)\geq c\,(\textrm{ord}(z))^\alpha,
\end{equation}
where ord$(z)$ is the order of the group element $z\in\widetilde{\Z_S}$.

Let $d(r)$ be the cardinality of the set $\{z\in\widetilde{\Z_S}: \lambda(z)\leq r\}$:
\begin{equation*}
d(r):=|\{z\in\widetilde{\Z_S}: \lambda(z)\leq r\}|.
\end{equation*}
In the analysis of rapid decay functions below we need growth estimates on $d(r)$.
\begin{lem}\label{est_dr} Suppose that a non-archimedean length function $\lambda$ grows fast enough. Then we have
\begin{equation*}
d(r)\leq Cr^\beta
\end{equation*}
for some positive constants $\beta$ and $C$.
\end{lem}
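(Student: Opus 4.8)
The plan is to exploit the rigid subgroup structure of $\widetilde{\Z_S}$ together with the fast-growth hypothesis \eqref{fast_enough}, reducing the estimate on the cardinality $d(r)$ to an estimate on the order of a single, carefully chosen element.

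First I would observe, exactly as in the proof of the classification theorem, that the sublevel set $G_r := \{z \in \widetilde{\Z_S} : \lambda(z) \leq r\}$ is a finite subgroup of $\widetilde{\Z_S}$: it is nonempty (it contains $1$), and closure under multiplication and inverses follows from the non-archimedean property together with the symmetry $\lambda(z^{-1}) = \lambda(z)$ established above. By Proposition \ref{SubgroupsProp} this subgroup is cyclic of order $d(r) = |G_r|$ and is generated by $g = e^{2\pi i / d(r)}$, an element whose order is therefore exactly $d(r)$.

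The key step is then a single chain of inequalities applied to this generator. Since $g \in G_r$ we have $\lambda(g) \leq r$, while the fast-growth condition \eqref{fast_enough} gives $\lambda(g) \geq c\,(\mathrm{ord}(g))^\alpha = c\, d(r)^\alpha$. Combining these yields $c\, d(r)^\alpha \leq r$, and hence $d(r) \leq (r/c)^{1/\alpha}$. Taking $\beta = 1/\alpha$ and $C = c^{-1/\alpha}$ gives the claimed bound $d(r) \leq C r^\beta$.

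I do not anticipate a genuine obstacle here: the work is done entirely by Proposition \ref{SubgroupsProp}, which guarantees that $G_r$ is cyclic, so that its full cardinality is witnessed by the order of its generator. The only point requiring a moment's care is recognizing that one should apply \eqref{fast_enough} precisely to the generator $e^{2\pi i / d(r)}$, whose order realizes the maximal value $d(r)$; for any other element of $G_r$ the resulting bound would be weaker.
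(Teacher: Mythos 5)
Your proof is correct, but it takes a genuinely different route from the paper's, and in fact yields a sharper exponent. The paper ignores the group structure of the sublevel set entirely: it bounds $d(r)$ by the number of \emph{all} roots of unity in $S^1$ of order at most $(r/c)^{1/\alpha}$, and then invokes the asymptotics of the totient summatory function $\Phi(n)=\sum_{k\le n}\phi(k)\sim \tfrac{3}{\pi^2}n^2$ to conclude $d(r)\le Cr^{\beta}$ with $\beta = 2/\alpha$. You instead observe that the sublevel set is a finite subgroup, hence by Proposition \ref{SubgroupsProp} cyclic of order $d(r)$ with generator $e^{2\pi i/d(r)}$ of order exactly $d(r)$, and a single application of \eqref{fast_enough} to that generator gives $c\,d(r)^{\alpha}\le r$, i.e.\ $\beta = 1/\alpha$. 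Your argument is more elementary (no number-theoretic input) and the bound is strictly better; the price is that it leans on the non-archimedean property (needed to make the sublevel set a subgroup), whereas the paper's counting argument would survive for length functions satisfying only the growth condition \eqref{fast_enough}. Since the lemma only asks for \emph{some} $\beta$ and $C$, both proofs are fully adequate; just note for completeness that for $r<1$ the sublevel set is empty (as $\lambda\ge 1$), so the bound is trivial there, and your subgroup argument applies for all $r\ge 1$.
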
\label{fastenoughlemma}
\begin{proof} Our proof uses the following information about the large $n$ asymptotics of totient summatory function (\cite{HW} edition 6, Theorem 330):
\begin{equation*}
\Phi(n):=\sum_{k\leq n}\phi(k)\sim\frac{3}{\pi^2}n^2.
\end{equation*}
For a fixed $r$, observe the following:
\begin{equation*}
\begin{aligned}
|\{z\in\widetilde{\Z_S}:\textrm{ord}(z)\le r\}| &= \sum_{k\le r, k|S}|\{z\in S^1:\textrm{ord}(z)=k\}|\le\sum_{k\le r}|\{z\in S^1:\textrm{ord}(z)=k\}|\\
&=\sum_{k=1}^r\phi(k)\sim\frac{3}{\pi^2}r^2.
\end{aligned}
\end{equation*}
On the other hand, using the inequality in equation \eqref{fast_enough} and the above observation, we have that
\begin{equation*}
d(r)\le |\{z\in\widetilde{\Z_S}:c(\textrm{ord}(z))^\alpha\le r\}|\le Cr^\beta
\end{equation*}
for some positive constant $C$ and where $\beta=2/\alpha$.
\end{proof}

If $z\in\widetilde{\Z_S}$, let $\chi_z\in\widehat{\Z_S}$ be the corresponding character functions on $\Z_S$ so that
\begin{equation*}
\chi_z(1)=z.
\end{equation*}
Let $dx$ be the normalized Haar measure on $\Z_S$. To any continuous function $f$ on $\Z_S$ one can associate the usual Fourier series:
\begin{equation}\label{FSeries}
\sum_{z\in\widetilde{\Z_S}}\hat f_z \chi_z,
\end{equation}
where
\begin{equation*}
\hat f_z=\int_{\Z_S}f(x) \chi_{\bar z}(x)\,dx.
\end{equation*}
The series is not always convergent but the Fourier coefficients $\hat f_z$ determine $f$.

We define norms on the space of series given by equation \eqref{FSeries}  as follows:
\begin{equation*}
\|f\|_N:=\sum_{z\in\widetilde{\Z_S}}|\hat f_z|\lambda(z)^N.
\end{equation*}
Observe that we have the following property for every $N$:
\begin{equation*}
\|f\|_N\leq \|f\|_{N+1}.
\end{equation*}
Notice also that for complex conjugation we have:
\begin{equation*}
\|\bar f\|_N=\|f\|_N.
\end{equation*}
Additionally, if $\|f\|_N\leq\infty$, the series in equation \eqref{FSeries} converges uniformly to a continuous function on $\Z_S$ and we have:
 \begin{equation*}
\|f\|\leq \|f\|_0,
\end{equation*}
where $\|f\|$ is the usual sup norm on $C(\Z_S)$.

With those preliminaries we define  $C_{RD}(\Z_S)$ as
\begin{equation*}
 C_{RD}(\Z_S):=\{f\in C(\Z_S): \|f\|_N<\infty \textrm{ for every } N\}.
\end{equation*}

\begin{theo} $C_{RD}(\Z_S)$ is a smooth subalgebra of the $C^*$-algebra $C(\Z_S)$.
\end{theo}
\begin{proof} From the definition of $C_{RD}(\Z_S)$ it follows that it is a Frechet space closed under complex conjugation of functions. Since trigonometric polynomials are dense in $C(\Z_S)$, see \cite{M}, $C_{RD}(\Z_S)$ is also dense in $C(\Z_S)$. We need to verify submultiplicativity.  Let $f,g\in C_{RD}(\Z_S)$ and write them in there Fourier decomposition, that is
\begin{equation*}
f = \sum_{z\in\widetilde{\Z_S}}\hat{f}_z\chi_z\quad\textrm{and}\quad g = \sum_{w\in\widetilde{\Z_S}}\hat{g}_w\chi_w
\end{equation*}
and consider the following
\begin{equation*}
\begin{aligned}
\|fg\|_N &=\left\|\sum_{z,w}\hat{f}_z\hat{g}_w\chi_z\chi_w\right\|_N =\left\|\sum_{z,w}\hat{f}_z\hat{g}_w\chi_{zw}\right\|_N = \left\|\sum_u\left(\sum_{zw=u}\hat{f}_z\hat{g}_w\right)\chi_u\right\|_N\\
&=\sum_u\left|\sum_{zw=u}\hat{f}_z\hat{g}_w\right|\lambda(u)^N\le\sum_{z,w}|\hat{f}_z||\hat{g}_w|\lambda(zw)^N\\
&\le\sum_{z,w}|\hat{f}_z||\hat{g}_w|\lambda(z)^N\lambda(w)^N=\|f\|_N\|g\|_N.
\end{aligned}
\end{equation*}

It remains to show that $C_{RD}(\Z_S)$ has polynomially bounded exponentials.  Using the above Fourier decomposition for $f$, define the following
\begin{equation*}
f_{\le m} = \sum_{\{z:\lambda(z)\le l_m\}}\hat{f}_z\chi_z\quad\textrm{and}\quad f_{> m} = \sum_{\{z:\lambda(z)> l_m\}}\hat{f}_z\chi_z
\end{equation*}
for some natural number $m$.  Notice that for each natural number $m$ we have that 
$$f=f_{\le m} + f_{> m}.$$

Let $n$ be a nonzero integer and $f\in C_{RD}(\Z_S)$ real, $f=\overline{f}$, we need to show that $\|e^{inf}\|_N$ is bounded by a polynomial in $n$. Since $\lambda(z^{-1})=\lambda(z)$, it follows that 
$$\overline{f}_{\le m} = f_{\le m}\textrm{ and }\overline{f}_{>m}=f_{> m}.$$

Notice that by the properties of the $N$-norm we have that
\begin{equation*}
l_{m+1}\cdot\|f_{> m}\|_N = l_{m+1}\sum_{\lambda(z)>l_m}|\hat{f}_z|\lambda(z)^N\le\sum_{\lambda(z)>l_m}|\hat{f}_z|\lambda(z)^{N+1}\le\|f\|_{N+1}.
\end{equation*}

Using the fact that $C(\Z_S)$ is commutative, the submultiplicativity of norms in $C_{RD}(\Z_S)$ and the above observation, we have that
\begin{equation*}
\begin{aligned}
\|e^{inf}\|_N &= \|e^{inf_{\le m}}e^{inf_{> m}}\|_N\le\|e^{inf_{\le m}}\|_N\|e^{inf_{> m}}\|_N\\
&\le\|e^{inf_{\le m}}\|_Ne^{|n|\|f_{> m}\|_N}\le e^{\frac{|n|}{l_m}\|f\|_{N+1}}\|e^{inf_{\le m}}\|_N.
\end{aligned}
\end{equation*}

Next we want to estimate $\|e^{inf_{\le m}}\|_N$.  Notice that the set $\{z:\lambda(z)\le l_m\}$ in the definition of $f_{\le m}$ is the group $G_m$ of equation \eqref{Gmdef}. The functions on $\Z_S$ which are given by a finite Fourier series sum over characters in $G_m$ can be alternatively characterized as those functions $f$ such that for every $x\in\Z_S$
\begin{equation*}
f(x+s_m)=f(x).
\end{equation*}
In particular, the exponential 
$g_m:=e^{inf_{\le m}}$ is also such a function and so has a  finite Fourier series expansion:
\begin{equation*}
g_m=e^{inf_{\le m}}=\sum_{z\in G_m}(\widehat{g_m})_z\chi_z.
\end{equation*}
Moreover, the Fourier coefficients $(\widehat{g_m})_z$ satisfy the following inequality
\begin{equation*}
(\widehat{g_m})_z= \left|\int_{\Z_S}e^{inf_{\le m}}\overline{\chi}_z(x)\ dx\right|\le 1.
\end{equation*}
Consequently, we have that
\begin{equation*}
\|e^{inf_{\le m}}\|_N\le \sum_{z\in G_m} 1\cdot\lambda(z)^N\le s_m\,l_m^N.
\end{equation*}
This might be considered as a conceptual crux of the proof: the exponentials can be conveniently estimated on a dense subalgebra of $C(\Z_S)$ of functions with finite Fourier series expansion.

Returning to the estimate on the norm $\|e^{inf}\|_N $, we now have:
\begin{equation*}
\|e^{inf}\|_N \le e^{\frac{|n|}{l_m}\|f\|_{N+1}}\|e^{inf_{\le m}}\|_N\le e^{\frac{|n|}{l_m}\|f\|_{N+1}}s_m\,l_m^N.
\end{equation*}
Using Lemma \ref{est_dr} for $r=l_m$ we get:
\begin{equation*}
\|e^{inf}\|_N \le e^{\frac{|n|}{l_m}\|f\|_{N+1}}s_m\,l_m^N \le C\,e^{\frac{|n|}{l_m}\|f\|_{N+1}}\,l_m^{N+\beta}.
\end{equation*}

The estimate above is valid for any $m$. Notice that for any $n$ with $|n|\ge 1$ there is a $m\in\Z_{\geq 0}$ such that
\begin{equation*}
l_m\le |n|\le l_{m+1}.
\end{equation*}
With this choice of $m$ we get the desired estimate on exponentials:
\begin{equation*}
\|e^{inf}\|_N \le C\,e^{\|f\|_{N+1}}\,|n|^{N+\beta},
\end{equation*}
finishing the proof.
\end{proof}

\section{Dynamical Cohomology}
We start this section by reviewing dynamical cohomology, following \cite{KH} and \cite{T}.
By a {\it topological dynamical system} $(X,\varphi)$, we mean a topological space $X$ and a continuous map $\varphi:X\to X$, see \cite{KH}.  A topological dynamical system $(X,\varphi)$ is called {\it topologically transitive} if there exists a point $x\in X$ such that its orbit $\{\varphi^n(x)\}_{n\in\Z}$ is dense in $X$.  $(X,\varphi)$ is called {\it minimal} if every orbit is dense in $X$.  Suppose that $\Z_S$ is an odometer with $1$ being the generator of a dense cyclic subgroup. Then we define the map $\varphi:\Z_S\to \Z_S$ by the formula:
$$\varphi(x) = x+1.$$  
It follows that $(\Z_S,\varphi)$ is a minimal system.  

A useful notion in the theory of dynamical systems is the notion of a cocycle. It is a function $\rho: \N \times X \to \C$ that obeys the cocycle equation
$$\rho(k+l, x) = \rho(k,x) + \rho(k,\varphi^l(x)) $$
for all $k,l \in \N$ and $x \in X$. Part of the definition is also to specify a class of functions involved, perhaps continuous, smooth or, as will be the case below, rapid decay. If $\varphi$ is invertible, then one can extend cocycles to be functions on $\Z \times X$.

Notice that $\rho(k, x)$ is completely determined by the function $r(x):= \rho(1,x)$, namely:
\begin{equation*}
\rho(k, x)=r(x)+r(\varphi(x))+\ldots+r(\varphi^{k-1}(x)).
\end{equation*}

The significance of cocycles is that they allow one to construct skew products $X \times_\rho \C$ of the original dynamical system $X$. Those are defined as the Cartesian product 
$$X \times_\rho \C:=\{ (x,c): x \in X, c \in \C \}$$ 
with the map 
$$\varphi_\rho(x,c) := (\varphi(x),c + \rho(1,x)).$$ 
This turns out to be a useful method to build new dynamical systems.

A special type of cocycle is a coboundary, which is a cocycle  $\rho: \N \times X \to \C$ of the form $\rho(k,x) := g(\varphi^k(x)) - g(x)$ for some function $g: X \to \C$.  
Alternatively, we have
\begin{equation*}
r(x):= g(\varphi(x)) - g(x)
\end{equation*}
for some $g: X \to \C$, which is called a cohomological equation and occurs in a variety of dynamical systems situations.
An extension $X \times_\rho \C$ of a dynamical system by a coboundary $\rho(k,x) = g(\varphi^k(x)) - g(x)$ can be conjugated to the trivial extension $X \times_0 \C$ by the change of variables $(x,c) \mapsto (x,c-g(x))$.

While every coboundary is a cocycle, the converse is not always true.   One can measure the extent to which this converse fails by introducing the first cohomology group 
$$H^1(X,\varphi) := Z^1(X,\varphi) / B^1(X,\varphi),$$ 
where $Z^1(X,\varphi)$ is the space of cocycles and $B^1(X,\varphi)$ is the space of coboundaries (note that both spaces are abelian groups).

Our next goal is to compute such first cohomology group for odometers. From now on $X=\Z_S$, $\varphi(x)=x+1$ and all the functions are assumed to be rapid decay, since considering continuous functions does not lead to a sensible answer.
Notice also that in Proposition 2.6 of \cite{KM6} the first cohomology was essentially computed when all functions are assumed to be trigonometric polynomials.

\begin{prop}\label{property_F}
Let $f\in C_{RD}(\Z_S)$.  Then $\int_{\Z_S}f\,dx=0$ if and only if there is $g\in C_{RD}(\Z_S)$ such that 
$$f = g\circ\varphi-g.$$
\end{prop}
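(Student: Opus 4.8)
The plan is to diagonalize the coboundary operator $g\mapsto g\circ\varphi-g$ in the Fourier basis and then invert it coefficient by coefficient, the only real difficulty being a small--divisor estimate that the rapid decay norms are built to absorb. First I would record how $\varphi$ acts on characters: since $\chi_z(x+1)=\chi_z(x)\chi_z(1)=z\,\chi_z(x)$, we have $\chi_z\circ\varphi=z\,\chi_z$. Hence if $g=\sum_w\hat g_w\chi_w$ then
\[
g\circ\varphi-g=\sum_{w\in\widetilde{\Z_S}}(w-1)\,\hat g_w\,\chi_w,
\]
so in Fourier coefficients the coboundary operator is simply multiplication by $(w-1)$, which annihilates the coefficient at the trivial character $w=1$. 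Because $\int_{\Z_S}\chi_z\,dx$ vanishes unless $z=1$, we have $\int_{\Z_S}f\,dx=\hat f_1$. This yields the easy implication at once: if $f=g\circ\varphi-g$ then $\hat f_1=0$, i.e. $\int_{\Z_S}f\,dx=0$.

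For the converse, assume $\hat f_1=\int_{\Z_S}f\,dx=0$ and define $g$ through its Fourier coefficients by
\[
\hat g_w:=\frac{\hat f_w}{w-1}\quad(w\neq 1),\qquad \hat g_1:=0.
\]
Formally this gives $g\circ\varphi-g=f$, so the entire content of the proof is to show that $g\in C_{RD}(\Z_S)$, that is $\|g\|_N<\infty$ for every $N$; once this is established the rearrangements are justified by absolute and uniform convergence (which holds since $\|g\|_0<\infty$). This is exactly where the small divisors $w-1$ must be controlled.

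The key estimate I would prove is a lower bound on $|w-1|$ in terms of $\textrm{ord}(w)$. Writing $w=e^{2\pi ik/s}$ with $s=\textrm{ord}(w)$ and $k\perp s$, we have $|w-1|=2|\sin(\pi k/s)|$, whose minimum over admissible $k$ is attained at $k=1$ (equivalently $k=s-1$, both always coprime to $s$), so that $|w-1|\ge 2\sin(\pi/s)\ge 4/s$ for $s\ge 2$ by the elementary bound $\sin x\ge (2/\pi)x$ on $[0,\pi/2]$. Thus $|w-1|^{-1}\le\tfrac14\,\textrm{ord}(w)$. Invoking the grows--fast--enough hypothesis \eqref{fast_enough}, namely $\textrm{ord}(w)\le(\lambda(w)/c)^{1/\alpha}$, I obtain a constant $C'$ with $|w-1|^{-1}\le C'\,\lambda(w)^{1/\alpha}$ for all $w\neq 1$. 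Plugging this into the norm and using $\lambda(w)\ge 1$ to replace the possibly non--integer exponent $1/\alpha$ by $m:=\lceil 1/\alpha\rceil$ gives
\[
\|g\|_N=\sum_{w\neq 1}\frac{|\hat f_w|}{|w-1|}\,\lambda(w)^N\le C'\sum_{w\neq 1}|\hat f_w|\,\lambda(w)^{N+1/\alpha}\le C'\,\|f\|_{N+m}<\infty,
\]
the finiteness holding since $f\in C_{RD}(\Z_S)$. Hence $g\in C_{RD}(\Z_S)$ and $f=g\circ\varphi-g$, completing the converse. The main obstacle is precisely this small--divisor phenomenon: $|w-1|$ can be arbitrarily close to $0$, and it is only the interplay of a length function that grows fast enough with the rapid decay of $\{\hat f_w\}$ that tames $1/|w-1|$ --- which is exactly why the equation has no sensible solution in the merely continuous category.
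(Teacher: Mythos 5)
Your proposal is correct and follows essentially the same route as the paper: reduce to Fourier coefficients, divide by the small divisor $w-1$, bound $|w-1|^{-1}$ by a constant times $\mathrm{ord}(w)$ via basic trigonometry, and then use the grows-fast-enough condition \eqref{fast_enough} to convert that into $\lambda(w)^{1/\alpha}$, yielding $\|g\|_N\le C\|f\|_{N+1/\alpha}$. Your version is in fact slightly more explicit about the constant in the sine estimate and about rounding the exponent $1/\alpha$ up to an integer, but there is no substantive difference.
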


\begin{proof}
By the invariance of Haar measure we have:
\begin{equation*}
\int_{\Z_S}(g\circ\varphi-g)\,dx=\int_{\Z_S}(g(x+1)-g(x))\,dx=0,
\end{equation*}
which verifies the necessary of the biconditional statement.

If $f\in C_{RD}(\Z_S)$ then $f$ has the following decomposition:
\begin{equation*}
f=\sum_{z\in\widetilde{\Z_S}}\hat f_z \chi_z
\end{equation*}
Notice that we have
\begin{equation*}
\int_{\Z_S}\chi_z(x)\,dx = \left\{
\begin{aligned}
&1 &&\textrm{if }z=1  \\
&0 &&\textrm{else,}
\end{aligned}\right.
\end{equation*}
which means that $\int_{\Z_S}\chi_z(x)\,dx =0$ if and only if $\hat f_1=0$.  

Assuming $\hat f_1=0$, the goal is to find a function $g\in C_{RD}(\Z_S)$ such that 
\begin{equation*}
g=\sum_{z\ne 1}\hat g_z \chi_z
\end{equation*}
and
\begin{equation}\label{cocyc}
f(x) = g(x+1)-g(x).
\end{equation}
Notice that for a nontrivial character we must have $\chi_z(1)=z\neq 1$.
Therefore, we can choose 
\begin{equation*}
\hat g_z = \frac{\hat f_z}{z-1},
\end{equation*}
which clearly satisfies equation \eqref{cocyc}.
We need to verify that $g\in C_{RD}(\Z_S)$.  By definition, we have that
\begin{equation*}
\|g\|_N = \sum_{z\neq 1}\frac{|\hat{f}_z|}{|z-1|}\lambda(z)^N
\end{equation*}
Notice that if $z=e^{2\pi ij/k}$ for $j\perp k$ and $k=\textrm{ord}(z)$, then, by basic trigonometry, there exists a constant $C>0$ such that 
$$\frac{1}{|z-1|}\le C\,\textrm{ord}(z).$$  
Thus, using the growth condition in equation \eqref{fast_enough}, we have an estimate
\begin{equation*}
\|g\|_N\le C\sum_{z\neq1}|\hat{f}_z|(\textrm{ord}(z))\lambda(z)^N\le C\sum_{z\neq 1}|\hat{f}_z|\lambda(z)^{1/\alpha}\lambda(z)^N \le C\|f\|_{N+1/\alpha}.
\end{equation*}
\end{proof}

\begin{cor}
With the above notation, the first cohomology group of rapid decay functions
$H_{RD}^1(\Z_S,\varphi)$ is isomorphic to $\C$.
\end{cor}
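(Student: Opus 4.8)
The plan is to read off the first cohomology group directly from Proposition \ref{property_F}, which already does the essential analytic work. The cohomology group is $H_{RD}^1(\Z_S,\varphi) = Z^1 / B^1$, where I restrict throughout to cocycles and coboundaries built from rapid decay functions. Since every cocycle $\rho$ is determined by $r(x) = \rho(1,x)$ via the telescoping formula stated above, and since the coboundary condition $r = g\circ\varphi - g$ is likewise phrased in terms of $r$, I will identify $Z^1$ with the space $C_{RD}(\Z_S)$ of admissible generating functions $r$, and $B^1$ with the subspace of those $r$ that arise as $g\circ\varphi - g$ for some $g \in C_{RD}(\Z_S)$.

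First I would set up the linear functional $I: C_{RD}(\Z_S) \to \C$ given by integration against Haar measure, $I(r) = \int_{\Z_S} r\,dx$. This is well defined and continuous on $C_{RD}(\Z_S)$ because $\|r\| \le \|r\|_0$, and it is clearly surjective onto $\C$ (constants are rapid decay, as they are supported on the trivial character). The content of Proposition \ref{property_F} is precisely the statement that the kernel of $I$ coincides with the space of coboundaries $B^1$: the forward implication shows $B^1 \subseteq \ker I$, and the reverse implication constructs, for any $r$ with $I(r)=0$, a solution $g \in C_{RD}(\Z_S)$ of the cohomological equation, showing $\ker I \subseteq B^1$.

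From here the conclusion follows by the first isomorphism theorem for the linear map $I$. Since $Z^1 \cong C_{RD}(\Z_S)$, $B^1 = \ker I$, and $I$ is surjective onto $\C$, I get
\begin{equation*}
H_{RD}^1(\Z_S,\varphi) = Z^1/B^1 \cong C_{RD}(\Z_S)/\ker I \cong \operatorname{Ran}(I) = \C.
\end{equation*}
I do not anticipate any genuine obstacle here: all the hard analysis—the explicit choice $\hat g_z = \hat f_z/(z-1)$ and the estimate $\|g\|_N \le C\|f\|_{N+1/\alpha}$ guaranteeing that the constructed primitive stays rapid decay—has already been carried out in the proof of Proposition \ref{property_F}. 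The only small point to be careful about is the bookkeeping identifying $Z^1$ and $B^1$ with function spaces via $r$, and confirming that the isomorphism is one of abelian groups (indeed of vector spaces), which is immediate from the linearity of $I$.
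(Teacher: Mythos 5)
Your proposal is correct and follows essentially the same route as the paper, which simply observes that by Proposition \ref{property_F} the map $f\mapsto \int_{\Z_S}f\,dx$ induces the required isomorphism. You have merely spelled out the identification of $Z^1$ with $C_{RD}(\Z_S)$ and of $B^1$ with the kernel of the integration functional, which the paper leaves implicit.
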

\begin{proof} By Proposition \ref{property_F}, the map $f\mapsto \int_{\Z_S}f\,dx$ is the required isomorphism.
\end{proof}

\section{K-Theoretic Considerations}
In this section we discuss the $K$-theory of the smooth subalgebras $C_{RD}(\Z_S)$. We also discuss the $K$-homology groups of $C(\Z_S)$.  We then find explicit Fredholm modules and spectral triples whose classes generate the $K$-homology.  
\subsection{K-Theory}
Note that, by earlier discussions, $C_{RD}(\Z_S)$ are closed under the holomorphic calculus. Hence, the inclusion $C_{RD}(\Z_S)\subseteq C(\Z_S)$ induces an isomorphism in $K$-Theory. 

Let $C(\Z_S,\Z)$ be the Abelian group of continuous functions on $\Z_S$ with values in $\Z$ and let $\mathcal{E}(\Z_S, \Z)$ be the Abelian group of locally constant functions on $\Z_S$ with values in $\Z$.
Since $\Z_S$ is totally disconnected, it follows from Exercise 3.4 in \cite{RLL} that
$$K_0(C(\Z_S)) \cong C(\Z_S,\Z)=\mathcal{E}(\Z_S, \Z).$$ 
The last equality above follows because the range $\Z$ is discrete. Additionally, from \cite{Dav} Example III.2.5, $C(\Z_S)$ is AF.
Therefore, by Exercise 8.7 in \cite{RLL}, we have:
$$K_1(C(\Z_S)) = 0.$$

It is well known (see exercise 7.7.5 in \cite{HR}) that $C(\Z_S, \Z)$ is a free Abelian group. Below we describe a set of generators.

Suppose $s= (s_m)_{m\in\N}$ is a scale for $\Z_S$. Then, similarly to the usual expansion of a $p$-adic integer, we can represent an arbitrary element $x$ in $\Z_S$ by 
\begin{equation*}
x=    \sum_{j=1}^\infty x_j s_{j-1} 
\end{equation*}
with $s_0 = 1$ and ``digits" $0\leq x_j< s_j/s_{j-1}$. In particular, $s_n$ divides $x$ if and only if $x_j=0$ for all $j\leq n$.

For $n=0,1,2, \ldots$ and $0\leq x<s_n$ define
\begin{equation*}
1_{(n,x)}(z) = \left\{
\begin{aligned}
&1 &&\textrm{if }s_n|(z-x)  \\
&0 &&\textrm{else.}
\end{aligned}\right.
\end{equation*}
Those are characteristic functions of open and closed subsets of $\Z_S$. Any two of those sets are either disjoint or one is a subset of the other. By density of $\Z_{\geq 0}$ in $\Z_S$, their characteristic functions separate points in $\Z_S$. It follows that linear combinations of functions $1_{(n,x)}$ are dense in $C(\Z_S)$. Additionally, by compactness of $\Z_S$, any function in $C(\Z_S, \Z)$ is a finite integer combination of $1_{(n,x)}$'s. However, $1_{(n,x)}$'s are not independent since
\begin{equation}\label{1(n,x)reductive}
1_{(n,x)} = \sum_{l=0}^{s_{n+1}/s_n-1} 1_{(n+1,x+ls_n)}.
\end{equation}

To proceed further we consider 
\begin{equation*}
1_{(x)}:= 1_{(n(x),x)}, 
\end{equation*}
where $n(x)$ is defined so that $s_{n(x)-1}\leq x<s_{n(x)}$ with 
\begin{equation*}
1_{(0)}:= 1_{(0,0)}=1.
\end{equation*}

\begin{prop} The collection of functions $\{1_{(x)}\}$ for all $x=0,1,2, \ldots$ form free generators of the group $C(\Z_S, \Z)$. In other words,
any element $f\in C(\Z_S, \Z)$ can be uniquely written as a finite sum:
\begin{equation*}
f=\sum_{x=0}^\infty f_{(x)}\,1_{(x)}
\end{equation*}
where $f_{(x)}\in\Z$.
\end{prop}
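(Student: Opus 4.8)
The plan is to reduce the statement to a finite linear-algebra fact at each level $n$. Fix the scale and let $M_n\subseteq C(\Z_S,\Z)$ be the subgroup of those integer-valued functions that are constant on each of the clopen sets $\{z:s_n\mid(z-x)\}$, $0\le x<s_n$; equivalently, $M_n$ is the integer span of the $s_n$ indicators $1_{(n,0)},\dots,1_{(n,s_n-1)}$. Since these indicators have pairwise disjoint supports that partition $\Z_S$, they form a free $\Z$-basis, so $M_n\cong\Z^{s_n}$. Any $f\in C(\Z_S,\Z)$ is locally constant and, by compactness, is a finite integer combination of indicators $1_{(n_i,x_i)}$; taking $n=\max_i n_i$ and using $M_{n_i}\subseteq M_n$ (because $s_{n_i}\mid s_n$) shows $f\in M_n$. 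Hence it suffices to prove that for every $n$ the $s_n$ functions $\{1_{(x)}:0\le x<s_n\}$ form a $\Z$-basis of $M_n$; the existence and uniqueness of the global expansion then follow by choosing $n$ large enough to contain all indices in play.

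First I would check that $\{1_{(x)}:0\le x<s_n\}$ spans $M_n$, by induction on $n$ using the refinement relation \eqref{1(n,x)reductive}. The point is that every level-$n$ indicator is an integer combination of generators of index $<s_n$. For the inductive step, an indicator $1_{(n+1,x)}$ with $s_n\le x<s_{n+1}$ is literally the generator $1_{(x)}$, while for $0\le x<s_n$ one solves \eqref{1(n,x)reductive} for the $l=0$ term,
\begin{equation*}
1_{(n+1,x)}=1_{(n,x)}-\sum_{l=1}^{s_{n+1}/s_n-1}1_{(n+1,\,x+ls_n)},
\end{equation*}
and observes that each remaining index $x+ls_n$ with $l\ge 1$ satisfies $s_n\le x+ls_n<s_{n+1}$, so $1_{(n+1,x+ls_n)}=1_{(x+ls_n)}$ is again a generator, and $1_{(n,x)}$ is handled by the inductive hypothesis. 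All indices that occur remain below $s_{n+1}$, which keeps the induction inside the prescribed family.

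For independence I would expand each generator back into the level-$n$ indicator basis and read off a unimodular change of basis. Writing $1_{(x)}=1_{(n(x),x)}$ and splitting its support into the clopen sets $\{z:s_n\mid(z-\,\cdot\,)\}$ gives
\begin{equation*}
1_{(x)}=\sum_{l=0}^{s_n/s_{n(x)}-1}1_{(n,\,x+ls_{n(x)})},
\end{equation*}
where the smallest index appearing is $x$ itself, with coefficient $1$, and every other index is strictly larger. Ordering both $\{1_{(n,y)}\}_{y<s_n}$ and $\{1_{(x)}\}_{x<s_n}$ by their index, the matrix $T$ whose $(y,x)$ entry is the coefficient of $1_{(n,y)}$ in $1_{(x)}$ is therefore lower unitriangular, so $\det T=1$ and $T$ is invertible over $\Z$. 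This already shows (simultaneously with the previous paragraph, which it subsumes) that the $s_n$ generators form a genuine basis of $M_n$. Alternatively one may simply invoke the fact that a surjective endomorphism of $\Z^{s_n}$ is automatically injective, so any spanning set of size $s_n$ in a rank-$s_n$ free $\Z$-module is a basis.

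The main obstacle is the bookkeeping in the two combinatorial steps: one must verify that the generators lying in $M_n$ are exactly the $1_{(x)}$ with $0\le x<s_n$, so that their count matches the rank $s_n$, and that in the expansion of $1_{(x)}$ the index $x$ is genuinely minimal, which is what makes $T$ triangular. Both rest on the defining property $s_{n(x)-1}\le x<s_{n(x)}$ of $n(x)$ together with the refinement identity \eqref{1(n,x)reductive}. Once the level-$n$ basis statement is in hand, the global conclusion is immediate: an arbitrary $f\in C(\Z_S,\Z)$ lies in some $M_n$ and has there a unique integer expansion in $\{1_{(x)}:0\le x<s_n\}$, and any finite integer relation among the $1_{(x)}$ likewise lives in a single $M_n$, where it must be trivial.
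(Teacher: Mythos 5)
Your proof is correct. The existence half is essentially the paper's argument: the same induction on $n$ via the refinement identity \eqref{1(n,x)reductive}, solving for the $l=0$ term and observing that the remaining indices $x+ls_n$ with $l\ge 1$ land in $[s_n,s_{n+1})$ and are therefore already generators. Where you genuinely diverge is in the uniqueness half. The paper evaluates a putative relation $\sum_x f_{(x)}1_{(x)}=0$ at the points $0,1,2,\ldots$ in turn, using that $1_{(n(x),x)}(z)=1$ forces $s_{n(x)}\mid(z-x)$ with $|z-x|<s_{n(x)}$, and peels off the coefficients one at a time. You instead localize everything in the filtration $M_n\cong\Z^{s_n}$, check that exactly the $s_n$ generators $1_{(x)}$ with $0\le x<s_n$ lie in $M_n$, and expand each back into the disjoint-support basis $\{1_{(n,y)}\}$ to exhibit a lower unitriangular (hence unimodular) change of basis; the alternative via surjective endomorphisms of $\Z^{s_n}$ is also valid. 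Your route makes the free-rank bookkeeping explicit and delivers spanning and independence in one stroke, at the cost of a little more setup; the paper's point-evaluation argument is more pedestrian but entirely self-contained. Both are correct, and your verification that the index $x$ is minimal in the expansion $1_{(x)}=\sum_{l=0}^{s_n/s_{n(x)}-1}1_{(n,\,x+ls_{n(x)})}$ (which is what makes $T$ triangular) is exactly the right point to check.
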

\begin{proof} 
As noticed above, linear combinations of the $1_{(n,x)}$'s, generate $C(\Z_S)$.  We also noticed that continuity of a function in $C(\Z_S,\Z)$ is equivalent to the function being locally constant.  Consequently, any function in $C(\Z_S,\Z)$ is a linear (integer) combinations of $1_{(n,x)}$'s.  Thus, to establish the existence part of the proposition we need to express functions $1_{(n,x)}$ in terms of functions $1_{(x)}$'s.  

We proceed by induction.  The ``base'' case of $n=0$ is immediate by definition.
%
For the inductive step, suppose $1_{(n,x)}$ can be expressed in terms of the $1_{(x)}$'s and consider the function $1_{(n+1,x)}$.  If $s_n\le x<s_{n+1}$, then by definition we have that $1_{(n+1,x)}=1_{(x)}$.  On the other hand if $0\le x<s_n$, by equation (\ref{1(n,x)reductive}) we have that
\begin{equation*}
1_{(n+1,x)} = 1_{(n,x)} - \sum_{l=1}^{s_{n+1}/s_n -1}1_{(n+1,x+ls_n)} = 1_{(n,x)} -\sum_{l=1}^{s_{n+1}/s_n-1}1_{(x+ls_n)},
\end{equation*}
which, by assumption, shows the inductive step and hence existence.

To establish uniqueness suppose that
\begin{equation*}
\sum_{x=0}^\infty f_{(x)}1_{(x)}=0
\end{equation*}
noting that the sum on the left-hand side is always a finite summation. We evaluate the sum on the left-hand side of the above equation at $0,1,2,\ldots$.  First we have
\begin{equation*}
\sum_{x=0}^\infty f_{(x)}1_{(x)}(0) = \sum_{x=0}^\infty f_{(x)}1_{(n(x),x)}(0)= 0.
\end{equation*}
Since $1_{(n(x),x)}(0) = 1$ if $s_{n(x)}$ divides $x$ for $s_{n(x)-1}\le x<s_{n(x)}$ and is zero else, from the above sum, it follows that $f_{(0)} = 0$.  Next consider
\begin{equation*}
0=\sum_{x=1}^\infty f_{(x)}1_{(x)}(1) = \sum_{x=1}^\infty f_{(x)}1_{(n(x),x)}(1)
\end{equation*}
for $s_{n(x)-1}\le x < s_{n(x)}$.  Like before note that $1_{(n(x),x)}(1) =1$ if $s_{n(x)}$ divides $x-1$ and is zero else.  Since $x-1<s_{n(x)}$, it follows that from the above sum that $f_{(1)}=0$.  Following this procedure inductively, we conclude that $f_{(x)}=0$ for all $x$ and hence the uniqueness follows.
\end{proof}

\subsection{K-Homology}
Since $C(\Z_S, \Z)$ is free using the Universal Coefficient Theorem we obtain 
$$K^1(C(\Z_S)) = 0 \textrm{ and } K^0(C(\Z_S)) = \Hom(C(\Z_S, \Z),\Z).$$

There are two classes of particularly interesting elements of $\Hom(C(\Z_S, \Z),\Z)$.
Define $e_{(y)}\in \Hom(C(\Z_S, \Z),\Z)$ for $0 \leq y\in \Z$ to be the ``dual" homomorphisms to the generators $1_{(x)}$ of $C(\Z_S, \Z)$:
\begin{equation*}
e_{(y)}(1_{(x)}) = \left\{
\begin{aligned}
&1 &&\textrm{if }y=x  \\
&0 &&\textrm{else,}
\end{aligned}\right.
\end{equation*}
The significance of $e_{(y)}$ is that any homomorphism $\Phi\in \Hom(C(\Z_S, \Z),\Z)$ can be uniquely written as a possibly infinite sum:
\begin{equation*}
\Phi=\sum_{y=0}^\infty\phi_{(y)}\,e_{(y)}
\end{equation*}
where $\phi_{(y)}\in\Z$. The infinite sum above is not a problem because when evaluating $\Phi$ on $f\in C(\Z_S, \Z)$ the resulting sum is always finite.

Another important class of homomorphisms are the evaluation maps $\delta_z$, $z=0,1,2,\ldots$:
\begin{equation*}
\delta_z(f):=f(z)
\end{equation*}
We have
\begin{equation*}
\delta_z(1_{(n,x)}) = \left\{
\begin{aligned}
&1 &&\textrm{if }s_n|(z-x)  \\
&0 &&\textrm{else.}
\end{aligned}\right.
\end{equation*}

To analyze this formula we need the following notation. If the natural number $n(x)$ is such that $s_{n(x)-1}\leq x<s_{n(x)}$ we set for $x\geq 1$, $x\in\Z$:
\begin{equation*}
\gamma(x):= x \textrm{ mod } s_{n(x)-1},
\end{equation*}
so that $0\leq \gamma(x)<  s_{n(x)-1}$.

The two classes of homomorphisms are closely related by the following formulas.
\begin{prop} With the above notation we have
\begin{equation*}
\delta_z=e_{(z)}+e_{(\gamma(z))}+e_{(\gamma^2(z))}+\ldots+e_{(0)} \textrm{ and } e_{(z)}=\delta_z-\delta_{\gamma(z)}.
\end{equation*}

\end{prop}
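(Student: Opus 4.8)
The plan is to exploit that, by the preceding proposition, the functions $\{1_{(x)}\}_{x\ge 0}$ freely generate $C(\Z_S,\Z)$, so that two elements of $\Hom(C(\Z_S,\Z),\Z)$ coincide as soon as they agree on every generator $1_{(x)}$; moreover each $\Phi\in\Hom(C(\Z_S,\Z),\Z)$ has the expansion $\Phi=\sum_{y}\Phi(1_{(y)})\,e_{(y)}$. Thus both identities reduce to computing the integers $\delta_z(1_{(x)})=1_{(x)}(z)$. The one elementary fact I would record at the outset is that, since $1_{(x)}=1_{(n(x),x)}$, we have $1_{(x)}(w)=1$ precisely when $s_{n(x)}\mid(w-x)$, i.e.\ when $w\equiv x\pmod{s_{n(x)}}$, and $1_{(x)}(w)=0$ otherwise.

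I would prove the second identity first, as the first then follows by a telescoping sum. To show $e_{(z)}=\delta_z-\delta_{\gamma(z)}$ it suffices to check that $1_{(x)}(z)-1_{(x)}(\gamma(z))$ equals $1$ when $x=z$ and $0$ otherwise. For $x=z$ one has $1_{(z)}(z)=1$ trivially, while $1_{(z)}(\gamma(z))=0$ because $\gamma(z)-z$ is a nonzero integer of absolute value strictly less than $s_{n(z)}$ and hence not divisible by $s_{n(z)}$. For $x\ne z$ I would use the congruence $\gamma(z)\equiv z\pmod{s_{n(z)-1}}$: when $n(x)\le n(z)-1$, so that $s_{n(x)}\mid s_{n(z)-1}$, the residues of $z$ and $\gamma(z)$ modulo $s_{n(x)}$ agree, whence $1_{(x)}(z)=1_{(x)}(\gamma(z))$; when $n(x)\ge n(z)$, both indicators vanish by a range argument, since $x$, $z$ and $\gamma(z)$ all lie in $[0,s_{n(x)})$ and the relevant differences are nonzero of absolute value below $s_{n(x)}$.

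Having established the second identity, I would obtain the first by telescoping along the orbit of $\gamma$. Since $\gamma(y)<s_{n(y)-1}\le y$ for every $y\ge 1$, the sequence $z>\gamma(z)>\gamma^2(z)>\cdots$ strictly decreases and terminates, reaching $0$ after finitely many steps, say $\gamma^m(z)=0$. Summing the identity $e_{(\gamma^k(z))}=\delta_{\gamma^k(z)}-\delta_{\gamma^{k+1}(z)}$ for $k=0,\ldots,m-1$ collapses to $\delta_z-\delta_0$. Finally $\delta_0=e_{(0)}$, which is the $z=0$ instance of the first formula and follows directly from the recorded fact, because $1_{(x)}(0)=1$ forces $s_{n(x)}\mid x$ with $0\le x<s_{n(x)}$, hence $x=0$. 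Rearranging the telescoped sum and using $\gamma^m(z)=0$ yields $\delta_z=e_{(z)}+e_{(\gamma(z))}+\cdots+e_{(0)}$, as claimed.

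I expect the main obstacle to be the case $x\ne z$ in the second identity, where the divisibility bookkeeping must be organized cleanly across the regimes $n(x)<n(z)$ and $n(x)\ge n(z)$. The cleanest formulation reduces everything to the single observation that $1_{(x)}(w)=1$ if and only if $w\bmod s_{n(x)}=x$, turning each indicator value into a statement about truncating $w$ to its first $n(x)$ digits in the $s$-adic expansion; from this viewpoint $\gamma$ simply strips the leading nonzero digit, and both identities become transparent.
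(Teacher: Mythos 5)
Your argument is correct and rests on the same computation as the paper's, namely evaluating $\delta_z(1_{(x)})=\bigl[\,s_{n(x)}\mid(z-x)\,\bigr]$ and tracking divisibility across the regimes $n(x)<n(z)$ and $n(x)\ge n(z)$; the only difference is the order of derivation, since the paper establishes the first formula directly by enumerating (in decreasing order) the finitely many $x$ in the support of $\delta_z$ and then reads off the second as a consequence, whereas you verify $e_{(z)}=\delta_z-\delta_{\gamma(z)}$ on generators and telescope along the orbit $z>\gamma(z)>\cdots>0$. This is essentially the same proof, and your implicit use of $n(\gamma(z))<n(z)\le n(x)$ to rule out $x=\gamma(z)$ in the range-argument case is the only step worth making explicit.
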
\label{edeltaprop}
\begin{proof} Fix $z\in\Z_{\geq 0}$. For $s_{n(x)-1}\le x <s_{n(x)}$ we have that
\begin{equation*}
\delta_z(1_{(x)}) = \delta_z(1_{(n(x),x)}) = \left\{
\begin{aligned}
&1 &&\textrm{ for }s_{n(x)}|(z-x) \\
&0 &&\textrm{ else}.
\end{aligned}\right.
\end{equation*}
We want to determine for which $x$ we have $\delta_z(1_{(x)})$ is non-zero. First observe that to get nontrivial $\delta_z(1_{(x)})$, we must have $x\le z$ since otherwise $0<x-z<s_{n(z)}$ and so $s_{n(z)}$ cannot divide $x-z$.  In particular, for a fixed $z$, there are only finitely many $x$ with non-zero evalution. We determine them inductively in decreasing order. 

The first is $x=z$ and it is the only one such that $s_{n(x)}=s_{n(z)}$.  The next possible $s_{n(x)}$ is $s_{n(z)-1}$ and for such a choice we must have $s_{n(z)-1}$ dividing $x-z$ and hence 
$$x=z\textrm{ mod }s_{n(z)-1} = \gamma(z).$$ 
Repeating this reductive process results in the first formula.  The second formula is a direct consequence of the first.
\end{proof}

\subsection{Fredholm Modules}
An odd Fredholm module over a unital C$^*$-algebra $A$, see for example \cite{HR}, is a triple $(H, \rho , F)$ where $H$ is a Hilbert space, $\rho: A \to B(H)$ is a $*$-representation, and $F \in B(H)$ satisfies 
$$F^*- F,\ I - F^2 \in \mathcal{K}(H),\textrm{ and }[F, \rho(a)] \in \mathcal{K}(H)$$ 
for all $a \in A$, where $ \mathcal{K}(H)$ is the algebra of compact operators in $H$. Similarly, an even Fredholm module is the above information, together with a $\Z_2$-grading $\Gamma$ of $H$, such that 
$$\Gamma^* = \Gamma,\ \Gamma^2 = I,\ \Gamma F = - F \Gamma,\ \textrm{and  }\Gamma \rho(a) = \rho(a) \Gamma$$ 
for all $a \in A$.  It follows that $H=H_{ev} \oplus H_{odd}$, $\rho_{ev} \oplus \rho_{odd}$ . Without the loss of generality, we can assume that $F$ is self-adjoint, so it has the form:
\begin{equation}\label{FGmatrix}
 F = \begin{bmatrix} 0 & G \\G^* & 0 \end{bmatrix}
\end{equation}
where $G$ maps $H_{odd}$ to $H_{ev}$.
Classes of odd Fredholm modules over $A$ form the K-homology group $K^1(A) : = KK^1(A, \mathbb{C})$, while classes of even Fredholm modules over $A$ form the K-homology group $K^0(A) : = KK^0(A, \mathbb{C})$. 

For any C$^*$-algebra $A$ and $i=0,1$ there are pairings between $K_i(A)$ and $K^i(A)$, see \cite{Connes}. For our purpose we only need the pairing of the class in $K_0(A)$ of a projection $P$ in the algebra $A$ and a Fredholm module $(H_{ev} \oplus H_{odd}, \rho_{ev} \oplus \rho_{odd}, F)$. It is given by 
$$
\langle [P]_0 , (H_{ev} \oplus H_{odd}, \rho_{ev} \oplus \rho_{odd}, F) \rangle = \textrm{Index}(\rho_{ev}(P) G \rho_{odd}(P)), 
$$
where the index above is of the restricted operator
$$\rho_{ev}(P) G \rho_{odd}(P): \textrm{Ran}( \rho_{odd}(P)) \to \textrm{Ran}(\rho_{ev}(P))\,.$$ 

Given a homomorphism $\Phi\in \Hom(C(\Z_S, \Z),\Z)$ such that
\begin{equation}\label{PhiDecomp}
\Phi=\sum_{y=0}^\infty\phi_{(y)}\,e_{(y)}= \sum_{y=0}^\infty\phi_{(y)}(\delta_y-\delta_{\alpha(y)})
\end{equation}
we want to describe an even Fredholm module $(H_{ev} \oplus H_{odd}, \rho_{ev} \oplus \rho_{odd}, F)$ such that its pairing with $K_0(C(\Z_S))$ gives the homomorphism $\Phi$. Informally, we build the representation $\rho_{ev} \oplus \rho_{odd}$ from evaluations at $y$ and $\gamma(y)$ for those $y$ for which $\phi_{(y)}\ne 0$ and group the terms in equation \eqref{PhiDecomp} with positive coefficients to form $H_{odd}$ and the terms with negative coefficients to form $H_{ev}$.

More precisely, the Hilbert space for this module is a separable Hilbert space with distinguished basis labeled in the following way. $H_{odd}$ has basis $E^{{odd},+}_{(y,j)}$ for all $y$ such that $\phi_{(y)}>0$ and $j=1,\ldots,\phi_{(y)}$ and also $E^{{odd},-}_{(z,k)}$ for all $z$ such that $\phi_{(z)}<0$ and $k=1,\ldots,|\phi_{(z)}|$. Similarly, the distinguished basis in $H_{ev}$ is labeled $E^{{ev},+}_{(y,j)}$ for all $y$ such that $\phi_{(y)}>0$ and $j=1,\ldots,\phi_{(y)}$ and also $E^{{ev},-}_{(z,k)}$ for all $z$ such that $\phi_{(z)}<0$ and $k=1,\ldots,|\phi_{(z)}|$.

The representation $\rho_{ev}$ of $C(\Z_S)$ in $H_{ev}$ is defined on basis elements as follows:
\begin{equation*}
\rho_{ev}(f)E^{{ev},+}_{(y,j)} = f(\gamma(y)) E^{{ev},+}_{(y,j)} \textrm{ and } \rho_{ev}(f)E^{{ev},-}_{(z,k)}= f(z) E^{{ev},-}_{(z,k)}.
\end{equation*}
The definition of $\rho_{odd}$ is similar:
\begin{equation*}
\rho_{odd}(f)E^{{odd},+}_{(y,j)} = f(y) E^{{odd},+}_{(y,j)} \textrm{ and } \rho_{odd}(f)E^{{odd},-}_{(z,k)}= f(\gamma(z)) E^{{odd},-}_{(z,k)}.
\end{equation*}
Thus, with respect to the preferred basis, the representation of $C(\Z_S)$ in $H$ is a  direct sum of one-dimensional diagonal representations.

We also consider an operator $G: H_{odd}\to H_{ev}$ given on basis elements by:
\begin{equation*}
GE^{{odd},+}_{(y,j)} = E^{{ev},+}_{(y,j)} \textrm{ and } GE^{{odd},-}_{(z,k)}=E^{{ev},-}_{(z,k)}.
\end{equation*}
The definition implies that $G$ is an isometric isomorphism of Hilbert spaces:
\begin{equation*}
G^*G=I_{H_{odd}} \textrm{ and } GG^*=I_{H_{ev}}.
\end{equation*}

\begin{theo}\label{FredModTheo}
With the above notation $(H_{ev} \oplus H_{odd}, \rho_{ev} \oplus \rho_{odd}, F)$, where $F$ is given by the equation \eqref{FGmatrix}, is an even Fredholm module the class of which in $K^0(C(\Z_S))$ is the homomorphism $\Phi$.
\end{theo}
\begin{proof}
Since $G^*G$ and $GG^*$ are the identity on the respective spaces $H_{odd}$ and $H_{ev}$, it follows that $F^2=I$ and $F$ is self-adjoint.  Thus, we need to verify that $[F, \rho(f)] \in \mathcal{K}(H)$.  A direct calculation yields
\begin{equation*}
[F,\rho(f)] = \begin{pmatrix} 0 & G\rho_{odd}(f)-\rho_{ev}(f)G \\ G^*\rho_{ev}(f)-\rho_{odd}(f)G^* & 0\end{pmatrix}.
\end{equation*}
We only have to check that  $G\rho_{odd}(f)-\rho_{ev}(f)G$ is a compact operator from $H_{odd}$ to $H_{ev}$.  

Notice on the ``odd'' basis elements we have the following formulas:
\begin{equation*}
\begin{aligned}
&(G\rho_{odd}(f)-\rho_{ev}(f)G)E_{(y,j)}^{odd,+} = (f(y)-f(\gamma(y)))E_{(y,j)}^{ev,+}\\
&(G\rho_{odd}(f)-\rho_{ev}(f)G)E_{(z,k)}^{odd,-} = (f(\gamma(z))-f(z))E_{(z,k)}^{ev,-}
\end{aligned}.
\end{equation*}

We first study the above equation for $f=1_{(x)}$ and the ``odd/plus'' basis elements. By Proposition \ref{edeltaprop}, we have that
\begin{equation*}
\begin{aligned}
(1_{(x)}(y)-1_{(x)}(\gamma(y)))E_{(y,j)}^{ev,+} &= (\delta_y(1_{(x)})-\delta_{\gamma(y)}(1_{(x)}))E_{(y,j)}^{ev,+} = e_{(y)}(1_{(x)})E_{(y,j)}^{ev,+} \\
&=\left\{\begin{aligned}
&E_{(x,j)}^{ev,+} &&\textrm{ if }y=x\\
&0 &&\textrm{ else.}
\end{aligned}\right.
\end{aligned}
\end{equation*}
The same happens with the ``odd/minus'' basis elements except for a difference of a minus sign.  Thus $G\rho_{odd}(1_{(x)})-\rho_{ev}(1_{(x)})G$ is a finite rank operator and so, if $f$ is a linear combination of the $1_{(x)}$'s, then $G\rho_{odd}(f)-\rho_{ev}(f)G$ is a finite rank operator.  Since the linear combinations of the $1_{(x)}$'s are dense in $C(\Z_S)$ and the finite rank operators are norm dense in the compact operators, it follows that $G\rho_{odd}(f)-\rho_{ev}(f)G$ is a compact operator on $H_{odd}$. This establishes that $(H_{ev} \oplus H_{odd}, \rho_{ev} \oplus \rho_{odd}, F)$
 is an even Fredholm module over $C(\Z_S)$.

Finally we just need to verify that the index homomorphism for this Fredholm module coincides with $\Phi$. It is enough to verify that on the generators $1_{(x)}$'s of $K_0(C(\Z_S))$.  

Let the operator
\begin{equation*}
B_x:\textrm{Ran}(\rho_{odd}(1_{(x)}))\to\textrm{Ran}(\rho_{ev}(1_{(x)}))
\end{equation*}
be given by $B_x = \rho_{ev}(1_{(x)})G\rho_{odd}(1_{(x)})$.  We want to compute its index. We first look at its domain.

Using the definition of $\rho$ on the ``odd'' basis elements yields
\begin{equation*}
\rho_{odd}(1_{(x)})E_{(y,j)}^{odd,+} = \delta_y(1_{(x)})E_{(y,j)}^{odd,+}\quad\textrm{and}\quad \rho_{odd}(1_{(x)})E_{(z,k)}^{odd,-}=\delta_{\gamma(z)}(1_{(x)})E_{(z,k)}^{odd,-}.
\end{equation*}

Using Proposition \ref{edeltaprop} we see that
\begin{equation*}
\delta_y(1_{(x)})=(e_{(y)}+e_{(\gamma(y))}+\cdots+e_{(0)})(1_{(x)}) \textrm{ and } \delta_{\gamma(z)}(1_{(x)})= (e_{(\gamma(z))}+e_{(\gamma^2(z))}+\cdots+e_{(0)})(1_{(x)}).
\end{equation*}
Consequently, we have
\begin{equation*}
\textrm{Ran}(\rho_{odd}(1_{(x)})) = \textrm{span }(X_1\cup X_2)
\end{equation*}
where
\begin{equation*}
X_1 = \{E_{(y,j)}^{odd,+}:x=\gamma^l(y),\textrm{ for some }l\ge0\}\textrm{ and }X_2 =\{E_{(z,k)}^{odd,-}: x=\gamma^l(z),\textrm{ for some }l\ge1\}.
\end{equation*}

Calculating on the ``odd'' basis elements yields
\begin{equation*}
B_xE_{(y,j)}^{odd,+} = \delta_{\gamma(y)}(1_{(x)})\delta_y(1_{(x)})E_{(y,j)}^{ev,+}\quad\textrm{and}\quad B_xE_{(z,k)}^{odd,-}=\delta_z(1_{(x)})\delta_{\gamma(z)}(1_{(x)})E_{(z,k)}^{ev,-}.
\end{equation*}

If $x=y$, it follows that $e_{(\gamma(y))}(1_{(x)})=e_{(\gamma^2(y))}(1_{(x)})=\cdots=e_{(0)}(1_{(x)})=0$.  On the other hand, if $x=\gamma^l(y)$ for $l\ge1$ then $\delta_y(1_{(x)})\delta_{\gamma(y)}(1_{(x)}) \neq 0$.  Consequently we have that the kernel of $B_x$ restricted to the range of $\rho_{odd}(1_{(x)})$ is equal to the following set:
\begin{equation*}
\textrm{span }\{E_{(y,j)}^{odd,+}:j=1,\ldots \varphi_x\}.
\end{equation*}
Thus we have that
\begin{equation*}
\textrm{dim}(\textrm{ker}(B_x)) = \left\{
\begin{aligned}
&\varphi_x &&\textrm{ if }\varphi>0\\
&0 &&\textrm{ if }\varphi<0.
\end{aligned}\right.
\end{equation*}

A completely analogous calculation shows that
\begin{equation*}
\textrm{dim}(\textrm{ker}(B_x^*)) = \left\{
\begin{aligned}
&0 &&\textrm{ if }\varphi>0\\
&-\varphi_x &&\textrm{ if }\varphi<0.
\end{aligned}\right.
\end{equation*}
Putting these together shows that $\textrm{ind}(B_x)=\varphi_x$.
\end{proof}

\subsection{Spectral Triples}
The unbounded analogues of Fredholm modules are called spectral triples. They are important tools in noncommutative geometry. In the definition of a spectral triple, as compared to a Fredholm module, we usually suppress mentioning the Hilbert space $H$ as it is implicit in the definition of a representation $\rho$. The new element is a dense $*$-subalgebra $\mathcal{A}$ of a C$^*$-algebra $A$.  See reference \cite{Ren} for a discussion of smooth subalgebras associated to spectral triples. 

A spectral triple for a unital $C^*$-algebra $A$ is a triple $(\mathcal A, \rho, \mathcal D)$ where $\rho: A \rightarrow B(H)$ is a representation of $A$ on a Hilbert space $H$,  $\mathcal A$ is a dense $*$-subalgebra of $A$, and $\mathcal D$ is an unbounded self-adjoint operator in $\mathcal H$ satisfying: 

(1) for every $a \in \mathcal A$, $\rho(a)$ preserves the domain of $\mathcal D$,

(2) for every $a \in \mathcal A$, the commutator $[\mathcal D, \rho(a)]$ is bounded,

(3) the resolvent $(1+ \mathcal D^2)^{-1/2}$ is a compact operator.
\newline Moreover,  $(\mathcal A, \mathcal H, \mathcal D)$ is said to be an even spectral triple if there a $\Z_2$-grading of $H$, such that $H=H_{ev} \oplus H_{odd}$, $\rho_{ev} \oplus \rho_{odd}$ and 
\begin{equation*}
\mathcal D = \begin{bmatrix} 0 & D \\D^* & 0 \end{bmatrix}.
\end{equation*}

An even spectral triple for a unital $C^*$-algebra $A$ determines a homomorphism from $K_0(A)$ to $\Z$. For our purpose we only need the formula for this homomorphism for a projection $P$ in the algebra $A$. As for Fredholm modules it is given by the index 
\begin{equation*}
P\mapsto \textrm{Index}(\rho_{ev}(P) D \rho_{odd}(P)).
\end{equation*}

Suppose $\lambda=\lambda_{s,l}$ is a non-archimedean length functions on  $\widetilde{\Z_S}$ and let $C_{RD}(\Z_S)$ be the corresponding smooth subalgebra of the C$^*$-algebra $C(\Z_S)$.
Let $\Phi\in \Hom(C(\Z_S, \Z),\Z)$ be a homomorphism from equation \eqref{PhiDecomp}. Also, let  $H_{ev} \oplus H_{odd}$ and $\rho_{ev} \oplus \rho_{odd}$ be the Hilbert space and the representation from Theorem \ref{FredModTheo}. We use those objects to define a spectral triple over $C(\Z_S)$ corresponding to $\Phi$.

If $n(y)$ is such that $s_{n(y)-1}\leq y<s_{n(y)}$ we set for $y\geq 1$, $y\in\Z$:
\begin{equation*}
\Lambda(y):= l_{n(y)},
\end{equation*}
and set $\Lambda(0)=1$.

Consider an operator $D: \textrm{dom}\,D\subset H_{odd}\to H_{ev}$ given on basis elements by:
\begin{equation*}
DE^{{odd},+}_{(y,j)} = \Lambda(y) E^{{ev},+}_{(y,j)} \textrm{ and } DE^{{odd},-}_{(z,k)}=\Lambda(z) E^{{ev},-}_{(z,k)},
\end{equation*}
with the maximal domain $\textrm{dom }D$, see below. Because $l_{n}\to \infty$, the resolvent $(1+ \mathcal D^2)^{-1/2}$ is a compact operator.

\begin{theo}
With the above notation $(C_{RD}(\Z_S), \rho_{ev} \oplus \rho_{odd}, \mathcal{D})$ is an even spectral triple which gives the homomorphism $\Phi$ on $K_0(C(\Z_S))$.
\end{theo}
\begin{proof}
The above consideration already verifies the that $\mathcal{D}$ has compact resolvent.  Thus we just need to verify that $\rho(f)$ preserves the domain of $\mathcal{D}$ and that $[\mathcal{D}, \rho(f)] \in B(H)$ for $f\in C_{RD}(\Z_S)$.  

First for the domain preservation, by the definition of $\mathcal{D}$, it is enough to check that $\rho_{odd}(f)$ preserves the maximal domain of $D$ for $f\in C_{RD}(\Z_S)$.  By the definition of the maximal domain we have that
\begin{equation*}
\begin{aligned}
&\textrm{dom }D =\\
&\,\,= \left\{\Omega = \sum_{y,j}\omega_{(y,j)}^+E_{(y,j)}^{odd,+} + \sum_{z,k}\omega_{(z,k)}^-E_{(z,k)}^{odd,-} : \sum_{y,j}\Lambda^2(y)|\omega_{(y,j)}^+|^2 + \sum_{z,k}\Lambda^2(z)|\omega_{(z,k)}^-|^2<\infty\right\}.
\end{aligned}
\end{equation*}
The definition of $\rho_{odd}$ yields
\begin{equation*}
\rho_{odd}(f)\Omega = \sum_{y,j}\omega_{(y,j)}^+f(y)E_{(y,j)}^{odd,+} + \sum_{z,k}\omega_{(z,k)}^-f(\gamma(z))E_{(z,k)}^{odd,-}
\end{equation*}
for $f\in C_{RD}(\Z_S)$.  Since $f$ is bounded, the above calculation shows that $\rho_{odd}(f)\Omega\in\textrm{dom }D$.

Similarly to the proof of Theorem \ref{FredModTheo} to check that $[\mathcal{D},\rho(f)]$ is bounded for $f\in C_{RD}(\Z_S)$, we compute $D\rho_{odd}(f)-\rho_{ev}(f)D$  on the ``odd/plus'' basis elements.  Calculating directly we get
\begin{equation*}
(D\rho_{odd}(f) - \rho_{ev}(f)D)E_{(y,j)}^{odd,+} = \Lambda(y)(f(y)-f(\gamma(y)))E_{(y,j)}^{ev,+}.
\end{equation*}
Thus, to show boundedness, it is enough to show that
\begin{equation*}
\underset{y\ge0}{\textrm{sup }}\Lambda(y)|f(y) - f(\gamma(y))|<\infty
\end{equation*}
for $f\in C_{RD}(\Z_S)$.  

For such an $f$ we have that $f(y) = \sum_{z\in\widehat{\Z_S}}\hat{f}_z\chi_z(y)$ and so
\begin{equation*}
f(y) - f(\gamma(y)) = \sum_{z\in\widehat{\Z_S}}\hat{f}_z(\chi_z(y) -\chi_z(\gamma(y))).
\end{equation*}
Since $s_{n(y)-1}\le y<s_{n(y)}$ and so $y= \gamma(y) + rs_{n(y)-1}$ for some integer $r$, and thus  we have that
\begin{equation*}
\chi_z(y) - \chi_z(\gamma(y)) = \chi_z(\gamma(y))\left(z^{rs_{n(y)-1}}-1\right).
\end{equation*}

For $z\in\widetilde{\Z_S}$ let $m(z)$ be the smallest non-negative integer such that
\begin{equation*}
z^{s_{m(z)}}=1.
\end{equation*}
It follows that if  $s_{n(y)}>s_{m(z)}$ then $s_{m(z)}$ divides  $s_{n(y)}$ and in the formula above  we have $\chi_z(y) - \chi_z(\gamma(y)) =0$ for  $z\in\widetilde{\Z_S}$ such that $n(y)>m(z)$.
Consequently we get the following estimate
\begin{equation*}
\begin{aligned}
\Lambda(y)|f(y)-f(\gamma(y))| &=\left|l_{n(y)}\sum_{z\in\widetilde{\Z_S},\,m(z)\ge n(y)}\hat{f}_z\chi_z(\gamma(y))\left(z^{rs_{n(y)-1}}-1\right)\right| \\
&\le 2l_{n(y)}\sum_{z\in\widetilde{\Z_S},\, m(z)\ge n(y)}|\hat{f}_z|\le 2\sum_{z\in\widetilde{\Z_S},\, m(z)\ge n(y)}l_m|\hat{f}_z|\\
&\le 2\sum_{z\in\widehat{\Z_S}}\Lambda(z)|\hat{f}_z| = 2\|f\|_1.
\end{aligned}
\end{equation*}

Finally, the index calculation for the spectral triple is completely analogous to the index calculation for the Fredholm module in the proof of Theorem \ref{FredModTheo} as the extra non-zero $\Lambda(y)$ factor does not change the kernel.  
\end{proof}

\end{document}